\newtheorem{thm}{Theorem}
\newtheorem{lem}[thm]{Lemma}
\newtheorem{prop}[thm]{Proposition}
\newtheorem{coro}[thm]{Corollary}
\theoremstyle{definition}
\newtheorem{defn}[thm]{Definition}
\newtheorem{example}[thm]{Example}
\theoremstyle{remark}
\newtheorem{rmk}[thm]{Remark}
\DeclareMathOperator{\supp}{supp}
\DeclareMathOperator{\divergence}{div}
\DeclareMathOperator{\proj}{proj}
\newcommand{\id}{\mathrm{Id}}
\newcommand{\R}{\mathbb{R}}
\newcommand{\Z}{\mathbb{Z}}
\newcommand{\lebesgue}{\mathscr L}
\newcommand{\T}{\mathbb T}
\renewcommand{\geq}{\geqslant}
\renewcommand{\leq}{\leqslant}
\date{}
\title{Deformations of closed measures and variational characterization of measures invariant under the Euler-Lagrange flow}
\author{Rodolfo R\'ios-Zertuche
}
\begin{document}
\maketitle

\begin{abstract}
The set of closed (or holonomic) measures provides a useful setting for studying optimization problems because it contains all curves, while also enjoying good compactness and convexity properties.

We study the way to do variational calculus on the set of closed measures. Our main result is a full description of the distributions that arise as the derivatives of variations of such closed measures. We give examples of how this can be used to extract information about the critical closed measures. 

The condition of criticality with respect to variations leads, in certain circumstances, to the Euler-Lagrange equations. To understand when this happens, we characterize the closed measures that are invariant under the Euler-Lagrange flow. Our result implies Ricardo Ma\~n\'e's statement that all minimizers are invariant. \end{abstract}

\tableofcontents

\section{Introduction}
\label{sec:intro}
Let $M$ be a $C^\infty$ manifold and $I=[0,t_0]$ be a closed interval in $\R$ for some $t_0>0$.
To each $C^1$ curve $\gamma\colon I\to M$, we may associate a measure on $TM\times I$ by pushing forward the Lebesgue measure on $I$ through the application
\[t\mapsto(\gamma(t),\gamma'(t),t),\quad t\in I.\]
This gives a Radon measure $\mu_\gamma$ on $TM\times I$ that acts as follows on measurable functions $f\colon TM\times I\to\R$:
\begin{equation}\label{eq:mugamma}
 \langle \mu_\gamma,f\rangle=\int_0^{t_0}f(\gamma(t),\gamma'(t),t)\,dt.
\end{equation}

Thus, given a Lagrangian density $L\colon TM\times I\to\R$, the problem of finding a curve $\gamma$ that minimizes the action
\[
 A_L(\gamma)=\int_0^{t_0} L(\gamma(t),\gamma'(t),t)\,dt
\]
is equivalent to the problem of finding a curve $\gamma$ that minimizes $\langle \mu_\gamma,L\rangle$. As the set of $C^1$ curves does not have good compactness and convexity properties as are required for a simple proof of the existence of the minimizers, it is an age-old approach \cite{giusti}
to relax the problem slightly to include the weak* closure $\mathscr H$ of the convexification of the set of measures $\mu_\gamma$; this space $\mathscr H$ is known as the set of \emph{closed measures}, and will be defined precisely in Definition \ref{def:closedmeasures}.

Traditionally, the calculus of variations proceeds by looking at curves $\gamma$ and wiggling them slightly, a method from which it is deduced that ---under the right circumstances--- the minimizers must satisfy the \emph{Euler-Lagrange equations},
\begin{multline}\label{eq:EulerLagrange}
 \frac{\partial L}{\partial x}(\gamma(t),\gamma'(t),t)=\frac{d}{dt}\left[\frac{\partial L}{\partial v}(\gamma(t),\gamma'(t),t)\right]\\=\gamma'(t)\frac{\partial^2L}{\partial x\partial v}+\gamma''(t)\frac{\partial^2 L}{\partial v^2}+\frac{\partial^2 L}{\partial v\partial t},
\end{multline}
where $L=L(x,v,t)$, $x\in M$, $v\in T_xM$, $t\in I$. In this paper, we investigate the outcome of taking this general principle of the calculus of variations and applying it in the more general setting of the set of closed measures $\mathscr H$. 

\begin{defn}\label{def:generalvariations}
 A variation of a measure $\mu$ is a family of measures $(\mu_s)_{s\geq 0}\subset \mathscr H$ starting at $\mu=\mu_0$ that is weakly differentiable at $s=0$, in the sense that there exists a compactly-supported distribution $\eta\in \mathscr E'$ such that, for all $f\in C^\infty(TM\times I)$, 
 \[
  \left.\frac{d}{ds}\right|_{s=0}\langle \mu_s,f\rangle=\langle\eta,f\rangle.
 \]
 The distribution $\eta$ is the \emph{derivative} of the variation $(\mu_s)_{s\geq0}$, and it will be denoted by $d\mu_s/ds|_{s=0}=\eta$.
\end{defn}
We find the following characterization of all distributions $\eta$ that can arise in this way; for the proof see Section \ref{sec:variations}.

\begin{thm}\label{thm:generalvariationscharacterization}
 Let $\mu\in\mathscr H$ be a closed measure, and let $\eta$ be an element of the space $\mathscr E'$ of compactly-supported distributions. Then there exists a variation $(\mu_s)_{s\geq 0}$ with $\mu_0=\mu$ and $d\mu_s/ds|_{s=0}=\eta$ if, and only if, we have 
 \[\langle\eta,\varphi\rangle\geq 0\]
 for all smooth functions $\varphi\in C^\infty(TM\times I)$ such that $\int \varphi\,d\mu=0$ and such that there is a Lipschitz function $f\colon M\times I\to \R$ vanishing on $M\times\{0,t_0\}$ that is differentiable on $\proj_{M\times I}(\supp\mu)$, and at all points of $M\times I$ where $f$ is differentiable it satisfies $\varphi\geq df$, with equality $\varphi=df $ throughout $\supp\mu$.
\end{thm}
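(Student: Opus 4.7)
The plan is to prove the two directions separately: necessity by a direct computation leveraging the closedness of each $\mu_s$, and sufficiency by a Hahn--Banach argument together with an explicit construction of variations realizing any $\eta$ in the described cone.

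For necessity, suppose $\eta = d\mu_s/ds|_{s=0}$ for some variation. Given test data $(\varphi, f)$ as in the theorem, the first step is to extend the defining closedness identity from smooth test functions to the Lipschitz function $f$. I would mollify $f$ to smooth approximations $f_\epsilon$ in local coordinates on $M\times I$, apply closedness of each $\mu_s\in\mathscr H$ to $f_\epsilon$, and pass to the limit $\epsilon\to 0$ using Rademacher's theorem and dominated convergence (controlled by the Lipschitz constant) to obtain $\int df\,d\mu_s=0$. The pointwise inequality $\varphi\geq df$ a.e.\ on $M\times I$ then gives $\int\varphi\,d\mu_s\geq 0$ for all $s>0$, while $\int\varphi\,d\mu_0=0$ by hypothesis; dividing by $s>0$ and sending $s\to 0^+$ yields $\langle\eta,\varphi\rangle\geq 0$.

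For sufficiency, the set $T_\mu\subset\mathscr E'$ of derivatives of variations of $\mu$ is a convex cone: it is closed under positive scaling (by reparameterizing $\mu_s\mapsto\mu_{\lambda s}$) and convex combinations (by splicing two variations together). Let $\mathscr C$ denote the cone of admissible test functions $\varphi$ described in the theorem, so that the necessity direction shows $\mathscr C\subseteq T_\mu^{\circ}$. To realize an arbitrary $\eta\in\mathscr C^{\circ}$, I would decompose $\eta$ according to the two features that define $\mathscr H$: its ``exact'' part encoding the linearization of closedness ($\langle\eta,d\psi\rangle=0$ for smooth $\psi$ vanishing at the boundary), realizable by pushforward variations $(\Phi_s)_*\mu$ along a suitable vector field on $TM\times I$; and its ``positive'' part away from $\supp\mu$, realizable by convex combinations $\mu_s=(1-s)\mu+s\nu$ with $\nu\in\mathscr H$ concentrated in the required region. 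The bipolar theorem then identifies $T_\mu$ with $\mathscr C^{\circ}$, yielding the characterization.

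The hardest step will be the explicit construction of variations realizing $\eta\in\mathscr C^{\circ}$, for two related reasons. First, producing auxiliary closed measures $\nu\in\mathscr H$ supported in prescribed neighborhoods of $TM\times I$ requires building short closed cycles (for instance, by loops in $M$ that exploit the time freedom in $I$) and averaging them, which is a delicate construction. Second, the Lipschitz regularity of $f$ in the statement is essential rather than incidental: it captures tangent directions at corners of the boundary of $\supp\mu$ that a smaller class of smooth test functions would miss, so $\mathscr C$ is genuinely larger than its smooth analogue. An additional technical point is verifying that $T_\mu$ is weak-$*$ closed in $\mathscr E'$, so that the bipolar recovers $T_\mu$ itself rather than merely its closure; I expect this to follow from the weak-$*$ closedness of $\mathscr H$ built into its definition.
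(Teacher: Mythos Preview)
Your necessity argument is basically sound and, once cleaned up, is a valid alternative to what the paper does: since $f$ is Lipschitz and vanishes on $M\times\{0,t_0\}$, an approximation by curves (or mollification) gives $\int df\,d\mu_s=0$ for every $s$, so $\int\varphi\,d\mu_s\geq 0$ with equality at $s=0$, and you differentiate. The paper instead notes in one line that any $\varphi$ of the stated form is minimized over $\mathscr H$ at $\mu$, so $s\mapsto\int\varphi\,d\mu_s$ has nonnegative right derivative.

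The sufficiency direction, however, has a real gap, and it is precisely where the paper imports two nontrivial results that your outline does not replace. The paper's proof is: (i) an abstract theorem on differentiable curves in convex subsets of locally convex spaces (Theorem~\ref{thm:differentiablecurves}, together with the normability Lemma~\ref{lem:boundednormability}) shows that $\eta$ is the derivative of some variation if and only if $\langle\eta,\varphi\rangle\geq 0$ for every smooth $\varphi$ such that $\mu$ minimizes $\nu\mapsto\int\varphi\,d\nu$ over $\mathscr H$; and (ii) a regularity theorem (Theorem~\ref{thm:mathertimedependent}) identifies that set of minimized functionals with exactly the class $\mathscr C$ described in the statement.

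Your bipolar strategy needs both of these ingredients, and you supply neither. For (i), you assert that closedness of $T_\mu$ should follow from closedness of $\mathscr H$; it does not, because $\mathscr E'$ is not metrizable, and upgrading ``$\eta$ lies in the weak-$*$ closed tangent cone'' to ``$\eta$ is the derivative of an actual curve in $\mathscr H$'' is exactly the content of Theorem~\ref{thm:differentiablecurves}, which requires restricting to a bounded, normable slice of $\mathscr E'$. For (ii), you only verify $\mathscr C\subseteq T_\mu^\circ$, which upon taking polars yields $\mathscr C^\circ\supseteq T_\mu$---the wrong inclusion for sufficiency. To get $\mathscr C^\circ\subseteq T_\mu$ you need the reverse containment at the level of functionals: every smooth $\varphi$ minimized at $\mu$ must admit a Lipschitz $f$ with $\varphi\geq df$ and equality on $\supp\mu$. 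That is a genuine structure/regularity result about minimizers in $\mathscr H$, not a formality, and without it $\mathscr C^\circ$ could be strictly larger than the set of realizable derivatives. Finally, your proposed explicit constructions (pushforwards along vector fields, convex interpolations $(1-s)\mu+s\nu$) only produce order-$1$ distributional derivatives of a very special shape; a generic $\eta\in\mathscr E'$ satisfying the inequality condition need not decompose that way, so this route cannot substitute for the abstract argument.
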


The usual variations of a curve $\gamma$ through ``wiggling'' indeed produce variations of the measure $\mu_\gamma$ associated as in equation \eqref{eq:mugamma}. \emph{Wiggling} amounts to flowing the curve with a smooth set of diffeomorphisms $\Phi_s\colon M\to M$, to produce a smooth set of curves $\gamma_s=\gamma\circ\Phi_s$ and the associated family of measures $\mu_s=\mu_{\gamma_s}$.
Example \ref{ex:horizontal} studies these variations. 

Then, in Example \ref{ex:vectorfieldvariations}, Corollary \ref{coro:vectorfieldvariations} gives a useful statement: \emph{given a vector field $F$ on $TM\times I$, a family $(\mu_s)_{s\in\R}\subset \mathscr H$ exists that flows $\mu$ in the direction $F$ if, and only if, $F$ ``preserves the closedness of $\mu$,%
''} in a sense that is made precise in the statement of the corollary, and which is fairly easy to check in practice.
This result is applied in Example \ref{ex:verticalvariations} and also in Section \ref{sec:horizontalvariations}, where we define and study \emph{velocity-dependent horizontal variations}, an important generalization of wiggling.
Upon proving Corollary \ref{coro:vectorfieldvariations}, we prove Lemma \ref{lem:thatextraregularity}, a result interesting in itself as it gives slightly better regularity for the differential $df$ of Theorem \ref{thm:mathertimedependent} than was proved earlier in \cite{myminimizable}.

As the other examples in Section \ref{sec:examplesvariations} show, there are variations of measures that are more general in the sense that they do not arise as simple generalizations of curve wiggling, and valuable information can be deduced from these. For instance, we find the existence of Hamilton-Jacobi type controls (Example \ref{ex:verticalvariations}), energy conservation (Example \ref{ex:transpositional}), and regularity (Example \ref{ex:mather}, itself interesting because of its being an infinitesimal analogy to Mather's construction) of the measures critical with respect to certain families of variations.

After studying the variations of closed measures in Section \ref{sec:variations}, Section \ref{sec:invariant} is devoted to the study of the circumstances that make the Euler-Lagrange equations \eqref{eq:EulerLagrange} valid. A way to do this would be to see whether the support of a measure contains curves that obey those equations; we discuss this briefly Appendix \ref{sec:eulerlagrange}, where we deduce the Euler-Lagrange equations from the existence of a critical subsolution of the Hamilton-Jacobi equation.

However, we feel that the most interesting way to study the validity of the Euler-Lagrange equations when dealing with closed measures is to consider the flow on $TM\times I$ induced by these equations: if $\partial^2L/\partial v^2$ is invertible, then we can solve \eqref{eq:EulerLagrange} for $\gamma''$ to get the vector field $\mathcal X$ followed by the curve $(\gamma(t),\gamma'(t),t)$, namely, 
\[(\gamma(t),\gamma'(t),t)'=\mathcal X(\gamma(t),\gamma'(t),t)\coloneqq\left(\gamma'(t),L_{vv}^{-1}(L_x-\gamma'L_{xv}-L_{vt}),1\right).\]
The flow of the vector field $\mathcal X$ is known as the \emph{Euler-Lagrange flow}, and in Theorem \ref{thm:invariance} we characterize all the closed measures $\mu\in\mathscr H$ that are invariant under this flow. 

We remark that invariant measures are not in general absolute minimizers of the action of $L$. A common example are the measures invariant under the geodesic flow. Also, the Hamilton-Jacobi equation does not govern them.

Before stating our the characterization of measures invariant under the Euler-Lagrange flow let us remark that it is variational, in the sense that we give a family $\Theta_\Delta(L)\subset \mathscr E'$ of distributions such that invariance under the Euler-Lagrange flow corresponds to $\Theta_\Delta(L)$-criticality, defined in Section \ref{sec:horizontalcriticality}. The elements of $\Theta_\Delta(L)$ are the derivatives of the exact horizontal variations, also defined in that section.

It turns out, however, that a purely variational characterization is impossible, as shown in Section \ref{sec:impossiblityexample}, so in Section \ref{sec:closedvelocities} we introduce a convex, dense subset $\mathscr H_\Delta$ of $\mathscr H$ within which such a characterization can be realized.
As shown in Proposition \ref{prop:closedvelocities}, the set $\mathscr H_\Delta$ consists of all closed measures $\mu$ for which an analogue of the second derivative exists. This is equivalent to these measures being closed on $TM\times I$ (rather than only on $M\times I$), so we  call them \emph{closed measures with closed velocities}. 
The set $\mathscr H_\Delta$ is convex and dense in $\mathscr H$, and it contains all possible invariant measures (Example \ref{ex:invariantisclosedvelocity}).

\begin{thm}\label{thm:generaleulerlagrangecharacterization}
 Let $\mu$ be a compactly-supported, positive, Radon measure on $TM\times I$. Assume that the support of $\mu$ is contained in an open set $U\subset TM\times I$ in which the second derivative $\partial^2L/\partial v^2$ of the $C^2$ Lagrangian function $L\colon TM\times I\to\R$ is invertible. 
 Then the following are equivalent:
 \begin{itemize}
  \item The measure $\mu$ is invariant under the Euler-Lagrange flow.
  \item The measure $\mu$ is a closed measure with closed velocities (i.e., $\mu\in\mathscr H_\Delta$) and $\mu$ is horizontally critical, i.e., for all exact horizontal variations $(\mu_s)_{s}$ of $\mu=\mu_0$ we have
  \[\left.\frac{d}{ds}\int L\,d\mu_s\right|_{s=0}=0.\]
 \end{itemize}
 Please refer to Section \ref{sec:invariant} for precise definitions of invariance, horizontal criticality, and exact horizontal variations.
\end{thm}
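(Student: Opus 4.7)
The plan is to establish the equivalence by reinterpreting invariance under the Euler-Lagrange vector field $\mathcal{X}$ as a distributional condition on $\mu$ on $TM\times I$, and then matching this with the combination of closedness with closed velocities (which encodes the existence of a ``second derivative'' velocity field $A$) and horizontal criticality (which identifies $A$ with the Euler-Lagrange field).

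For the forward implication, assume $\mu$ is $\mathcal{X}$-invariant, i.e.\ $\int \mathcal{X}g\,d\mu=0$ for every smooth compactly supported $g$ on $TM\times I$. Taking $g=f(x,t)$ independent of $v$, we get $\int(v\cdot\partial_x f+\partial_t f)\,d\mu=0$, which is closedness on $M\times I$. The general identity with $A(x,v,t)\coloneqq L_{vv}^{-1}(L_x-vL_{xv}-L_{vt})$ then reads $\int[v\partial_x g+A\partial_v g+\partial_t g]\,d\mu=0$, so $\mu$ admits the velocity field $A$ and lies in $\mathscr H_\Delta$. For horizontal criticality, let $(\mu_s)_s$ be an exact horizontal variation, which I expect is generated by a horizontal vector field $F=(X,\dot X,0)$ with $\dot X=v\partial_x X+\partial_t X$; then Corollary \ref{coro:vectorfieldvariations} gives $\frac{d}{ds}\int L\,d\mu_s|_{s=0}=\int(L_x\cdot X+L_v\cdot\dot X)\,d\mu$, and integrating the second term by parts against the closed-velocity identity applied to $G=L_v\cdot X$ reduces this to $\int\mathcal{X}(L_v\cdot X)\,d\mu$ up to terms that cancel, which vanishes by invariance.

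For the reverse implication, assume $\mu\in\mathscr H_\Delta$ with associated velocity field $A$ and $\mu$ is horizontally critical. Criticality against every exact horizontal variation yields
\[\int\bigl(L_x\cdot X+L_v\cdot(v\partial_x X+\partial_t X)\bigr)\,d\mu=0\]
for a sufficiently rich class of horizontal test fields $X$. Using the closed-velocities condition applied to $G=L_v\cdot X$ to integrate by parts transforms this into
\[\int X\cdot\bigl(L_x-vL_{xv}-AL_{vv}-L_{vt}\bigr)\,d\mu=0.\]
Because $X$ ranges over a dense family and $L_{vv}$ is invertible on $U\supset\supp\mu$, we conclude $A=L_{vv}^{-1}(L_x-vL_{xv}-L_{vt})$ on $\supp\mu$, i.e.\ the velocity field of $\mu$ agrees with the $v$-component of $\mathcal{X}$. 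Combined with closedness on $M\times I$ and closedness with respect to $A$, this yields $\int\mathcal X g\,d\mu=0$ for all smooth compactly supported $g$ on $TM\times I$, which is $\mathcal{X}$-invariance of $\mu$.

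The main obstacle I expect lies in the reverse direction: one must verify that the class of exact horizontal variations is rich enough to force the pointwise identification of $A$ with the Euler-Lagrange field $\mu$-almost everywhere, while simultaneously handling the boundary contributions at $t\in\{0,t_0\}$ and ensuring the integration by parts step is legitimate with only the distributional regularity provided by membership in $\mathscr H_\Delta$. Establishing that $\Theta_\Delta(L)$ separates enough functions on $\supp\mu$ is where the technical heart of the argument will sit, and it is presumably the reason Section \ref{sec:closedvelocities} restricts to $\mathscr H_\Delta$ rather than all of $\mathscr H$, as the impossibility example in Section \ref{sec:impossiblityexample} is meant to indicate.
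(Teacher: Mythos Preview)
Your guess at what ``exact horizontal variation'' means is the source of a genuine gap. You take $F=(X,\dot X,0)$ with $X=X(x,t)$ and $\dot X=v\partial_xX+\partial_tX$; this is the class of \emph{velocity-independent} horizontal variations from Example~\ref{ex:horizontal}. The paper's exact horizontal variations (Definition~\ref{def:horizontalcriticality}) are generated by \emph{velocity-dependent} fields $F\colon T\R^n\times I\to\R^n$ subject to the fiberwise closedness condition $d_v(L_{vv}F)=0$, and the derivative $\eta_{F,\mu,C}$ carries an extra $F_vC$ term coming from the second derivative $C$ of $\mu$ (equation~\eqref{eq:etaFmuC}). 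With your velocity-independent class, the reverse implication simply fails: Section~\ref{sec:insufficiency} constructs a measure in $\mathscr H_\Delta$ that is critical for all velocity-independent horizontal variations yet is not Euler--Lagrange invariant. So the ``$X$ ranges over a dense family'' step cannot go through with the class you wrote down.

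The idea you are missing is the reason the condition $d_v(L_{vv}F)=0$ appears at all: by Poincar\'e's lemma on each fiber it is exactly the condition that $F=L_{vv}^{-1}\partial_vf$ for some $f\in C^\infty(T\R^n\times I)$. The paper then proves the single identity
\[
\int \mathcal Xf\,d\mu=\langle\eta_{F,\mu,C},L\rangle\qquad\text{for }F=L_{vv}^{-1}\partial_vf,
\]
valid for any second derivative $C$ of $\mu$, via the integration-by-parts you outline (applied to $L_vF$, now with the $F_vC$ term present). This identity makes both implications immediate: horizontal criticality says the right side vanishes for all admissible $F$, hence for all $f$, which is invariance; conversely invariance kills the left side for all $f$, hence the right side for all admissible $F$. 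Note that the paper never tries to conclude $C=A$ $\mu$-a.e.\ as you do---that pointwise identification is stronger than needed and does not obviously follow, since $\{\partial_vf\}$ is only the space of fiberwise-closed vector fields, not all vector fields.
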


In Section \ref{sec:insufficiency}, we show that the class of variations most akin to ``wiggling,'' which are defined in Example \ref{ex:horizontal}, which is contained in the class of exact horizontal variations (the one that appears in the theorem), does not suffice for a complete variational characterization of invariance under the Euler-Lagrange flow. We do not know, however, whether the class of exact horizontal variations is the smallest one possible.

It is a theorem of Ricardo Ma\~n\'e that all minimizers of so-called Tonelli Lagrangians are invariant under the Euler-Lagrange flow; see \cite[\S2-4]{contrerasiturriagabook}. A slightly more general result is a corollary of our invariance characterization; see Section \ref{sec:manethm}.

\begin{rmk}
 The rest of the paper is written taking $M$ to be an open set $U\subseteq\R^n$. We have chosen to do this for simplicity, as this does not affect the generality of our statements, which are all essentially of local nature and the versions on $\R^n$ amount to taking a chart on $M$ and working on local coordinates. The statements can be easily adapted to general $C^\infty$ manifolds $M$.
\end{rmk}

\paragraph{Acknowledgements.}
I am deeply grateful for the help of  Alberto Abbondandolo, who suggested the question of variationally characterizing Euler-Lagrange invariance, and shared with me his substantial previous work in this direction.

It took the author quite some time to get this paper right. Along the way, many more people contributed advice and insightful discussions, among which the following are remembered with special gratitude: Antonio Ache, Luigi Ambrosio,  Camilo Arias Abad, Marie-Claude Arnaud, Victor Bangert, Patrick Bernard, Jaime Bustillo, Gonzalo Contreras, Albert Fathi, Charles Hadfield, Renato Iturriaga, Rafael de la Llave, John N. Mather, Jes\'us Puente Arrubarrena, Mar\'ia Amelia Salazar, and Stefan Suhr.

I am also very grateful to Princeton University, to the Institute for Computational and Experimental Research in Mathematics at Brown University, to the Max Planck Institute for Mathematics in Bonn, to the \'Ecole Normale Superieure de Paris, and to the Universit\'e de Paris -- Dauphine for their hospitality and support during the development of this research. 

\section{Variations of closed measures}
\label{sec:variations}

\subsection{Setting}

Let $U$ be an open subset of $\R^n$, and let $I=[0,t_0]$ be a time interval. We will denote by $x=(x_1,x_2,\dots,x_n)$ the coordinates in $U$, and by $t$ the coordinate in $I$. We will denote by $T\R^n\cong\R^n\times\R^n$ the tangent bundle of $\R^n$, and by $\pi_{\R^n\times I}\colon T(\R^n\times I)\to\R^n\times I$ the canonical projection. The tangent bundle $TU\cong U\times \R^n$ of $U$ is the a subset of $T\R^n$. On the fibers $T_x\R^n\cong\R^n$ of the tangent bundle $T\R^n$, we set coordinates $v=(v_1,v_2,\dots,v_n)$. Denote by $\mathbf 1=\partial/\partial t$ the vector field in the tangent bundle $TI$ such that $dt(\mathbf 1)=1$ throughout $I$. Thus coordinates on $T\R^n\times I$ will be of the form $(x,v,t)=(x_1,\dots,x_n,v_1,\dots,v_n,t)\in\R^{2n+1}$.

Let $\mathscr E'$ be the space of compactly-supported distributions on $TU\times I$. The topology in $\mathscr E'$ is the weak* topology with respect to the space of smooth functions $C^\infty(TU\times I)$.

\begin{defn}[Closed measures]\label{def:closedmeasures}
Let $\mathscr H\subset \mathscr E'$ be the set of Radon, com\-pact\-ly-supported measures $\mu$ on $T\R^n\times I$ such that, for all $\phi$ in $C^\infty(\R^n\times I)$ supported in $U\times (0,t_0)$,
\begin{equation}\label{eq:closedness}
 \int_{T\R^n\times I}[\phi_x(x,t)\cdot v+\phi_t(x,t)\cdot\mathbf 1]\,d\mu(x,v,t)=0.
\end{equation}
In other words, the boundary of the current induced by the measures in $\mathscr H$ lies outside of $U\times(0,t_0)$.
We will say that the elements of $\mathscr H$ are \emph{measures closed in $U$}, and when it is clear what $U$ is, we will simply refer to them as \emph{closed measures}.
\end{defn}

The vector $\mathbf 1=\partial/\partial t$ appears because, in the time $t$ direction, our objects have unit velocity.

\begin{rmk}\label{rmk:approximability}
 The equivalence between this definition and our description of $\mathscr H$ in the introduction follows from results in \cite{federerrealflat} (also explained in \cite[Theorem 1.3.4.6 in Part \textsc{ii}]{giaquintamodicasoucek2}; see also \cite{patrick,bangert}) that ascertain that every measure satisfying \eqref{eq:closedness} can be approximated by convex combinations of measures of the form $\mu_\gamma$. The intuition is that this should be true because, by the Fundamental Theorem of Calculus, all measures $\mu_\gamma$ satisfy \eqref{eq:closedness}, namely, $\langle\mu_\gamma,d\phi\rangle=\phi(\gamma(t_0),t_0)-\phi(\gamma(0),0)=0$ for all curves $\gamma$ and all $\phi$ as in the definition, so all measures in the closure of the convexification of the set of measures $\mu_\gamma$ should satisfy it too. 
\end{rmk}

\begin{defn}\label{def:variationRn}
Given $\mu\in \mathscr H$ and $\eta\in \mathscr E'$, we say that \emph{$\mu$ can be deformed in direction $\eta$} if there is a family of measures $(\mu_t)_{t\geq 0}$ contained in $\mathscr H$ such that $\mu_0=\mu$ and, for all $\varphi\in C^\infty(T\R^n\times I)$ supported in $TU\times I$,
 \[\left.\frac{d}{ds}\right|_{s=0+}\int_{T\R^n\times I}\varphi\,d\mu_s=\langle\eta,\varphi\rangle.\]
In this case the family $(\mu_t)_{t\geq 0}$ is a \emph{variation} of $\mu$.
\end{defn}

The terminology in the following definition is motivated by Theorem \ref{thm:mathertimedependent}.

\begin{defn}\label{def:minimizableL}
 Given a measure $\mu\in\mathscr H$, a \emph{$\mu$-minimizable Lagrangian} is a smooth function $\varphi\in C^\infty(T\R^n\times I)$ such that there is a Lipschitz function $f\colon \R^n\times I\to\R$ that is differentiable on $\pi(\supp\mu)$, and at all points $(x,t)\in\R^n\times I$  where $f$ is differentiable, satisfies
 \[\varphi(x,v,t)\geq df_{(x,t)}(v,\mathbf 1),\]
 with equality when $(x,v,t)\in\supp\mu$, and
 $\int df\,d\mu=0$.
\end{defn}

\subsection{Local characterization}\label{sec:characterizationvariations}

\begin{thm}\label{thm:variations}

 Let $\mu\in \mathscr H$ be measure closed in the open set $U\subset \R^n$, and let $\eta\in \mathscr E'$ be a compactly-supported distribution on $TU\times I$. Then $\mu$ can be deformed in the direction $\eta$
 if, and only if, $\eta$ satisfies 
 \[\langle\eta,\varphi\rangle\geq 0\]
 for all $\mu$-minimizable Lagrangians $\varphi$.
\end{thm}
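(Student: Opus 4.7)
The plan is to prove the two implications separately, with Hahn--Banach duality supplying the harder direction. The necessity direction is a direct calculation exploiting the inequality $\varphi \geq df$ built into the definition of a $\mu$-minimizable Lagrangian. The sufficiency direction identifies the polar cone of the admissible variational directions with the set of $\mu$-minimizable Lagrangians and then invokes the bipolar theorem.

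For necessity, let $(\mu_s)_{s \geq 0}$ be a variation of $\mu$ with derivative $\eta$, and let $\varphi$ be a $\mu$-minimizable Lagrangian with associated Lipschitz function $f$. The pointwise inequality $\varphi \geq df$ integrates to $\int \varphi\, d\mu_s \geq \int df\, d\mu_s$ for each $s \geq 0$, with equality at $s = 0$ because $\varphi = df$ on $\supp \mu$ and $\int df\, d\mu = 0$. Dividing by $s$ and letting $s \to 0^+$ yields $\langle \eta, \varphi\rangle \geq \liminf_{s \to 0^+} s^{-1}\int df\, d\mu_s$, and I would argue the right-hand side is zero. For a smooth $f$ vanishing on $\R^n \times \{0, t_0\}$, the closedness condition \eqref{eq:closedness} immediately forces $\int df\, d\mu_s = 0$; for the general Lipschitz case, I would mollify $f$ inside a fixed compact neighborhood of the supports of $\{\mu_s\}_{s \in [0,\epsilon]}$ and carefully track boundary contributions.

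For sufficiency, let $C \subset \mathscr E'$ denote the weak* closed convex cone generated by derivatives of variations of $\mu$, and let $C^\circ \subset C^\infty(TU \times I)$ be its polar. The bipolar theorem says $\eta \in C$ if and only if $\langle \eta, \varphi\rangle \geq 0$ for every $\varphi \in C^\circ$, so the theorem reduces to identifying $C^\circ$ with the $\mu$-minimizable Lagrangians. The inclusion of minimizable Lagrangians into $C^\circ$ is exactly the necessity direction. For the converse, given $\varphi \in C^\circ$, the scaling variations $\mu_s = (1 \pm s)\mu$ (valid for small $s > 0$, by positive-homogeneity of both positivity and the closedness constraint) give admissible derivatives $\pm \mu$, forcing $\int \varphi\, d\mu = 0$. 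Convex combinations $\mu_s = (1-s)\mu + s\nu$ with $\nu \in \mathscr H$ give admissible derivatives $\nu - \mu$, forcing $\int \varphi\, d\nu \geq 0$ for every $\nu \in \mathscr H$. Thus $\mu$ minimizes $\nu \mapsto \int \varphi\, d\nu$ over $\mathscr H$ with minimum value $0$, and Theorem \ref{thm:mathertimedependent}, proved in \cite{myminimizable}, then produces the Lipschitz function $f$ certifying $\varphi$ as a $\mu$-minimizable Lagrangian.

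The main obstacle is twofold. First, the identification of $C$ with the admissible cone itself (not merely its closure) requires showing that weak* limits of derivatives of variations are themselves derivatives of some variation; this closedness assertion will likely need a careful compactness or diagonal construction, and is where most of the genuine work lies. Second, the necessity step must handle the Lipschitz (as opposed to smooth) regularity of $f$ with some care, because $f$ may not vanish at the temporal boundary of $\R^n \times I$ and $df$ is defined only almost everywhere; a mollification argument exploiting the compact support of $\mu_s$ and a localization to a neighborhood of $\supp \mu$ should resolve this, but the bookkeeping is delicate.
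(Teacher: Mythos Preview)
Your approach and the paper's share the same skeleton: identify the $\mu$-minimizable Lagrangians with the linear functionals on $\mathscr E'$ that attain their minimum over $\mathscr H$ at $\mu$ (via Theorem~\ref{thm:mathertimedependent}), and then use a Hahn--Banach/bipolar-type duality to characterize the admissible directions. The paper, however, does not carry out the bipolar argument by hand. It invokes an abstract result, Theorem~\ref{thm:differentiablecurves} (proved in \cite{mydifferentiablecurves}), which states directly that in a locally convex space a differentiable curve $c\colon[0,\infty)\to C$ with $c(0)=p$ and $c'(0^+)=v$ exists if and only if $\theta(v)\geq 0$ for every continuous linear $\theta$ attaining its minimum over $C$ at $p$. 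Applied with $V=\mathscr E'$, $C=\mathscr H$, $p=\mu$, $v=\eta$, this handles both implications at once and, crucially, produces an actual variation $(\mu_s)_s$ rather than merely placing $\eta$ in the closure of the tangent cone. The ``main obstacle'' you flag---passing from $\eta\in\overline{C}$ to $\eta$ being a genuine derivative---is exactly what that theorem absorbs, and its hypotheses include a local normability condition that the paper verifies here via Lemma~\ref{lem:boundednormability} (bounding the order and the seminorm $|\cdot|'_{\overline W,m}$ of $\eta$ on a compact neighborhood of $\supp\mu$). Without that ingredient your bipolar route has a real gap: the bipolar theorem only returns the weak*-closed cone generated by the derivatives, and a diagonal/compactness construction in $\mathscr E'$ does not obviously close it.

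Two smaller remarks. First, your necessity argument is more delicate than needed: once you accept (as you do in your sufficiency step) that a $\mu$-minimizable $\varphi$ is precisely a smooth function with $\int\varphi\,d\mu=0\leq\int\varphi\,d\nu$ for all $\nu\in\mathscr H$, necessity is immediate from $\int\varphi\,d\mu_s\geq 0=\int\varphi\,d\mu$, and you never have to integrate the merely almost-everywhere-defined $df$ against $\mu_s$ or mollify. Second, your scaling variations $\mu_s=(1\pm s)\mu$ and convex interpolations $\mu_s=(1-s)\mu+s\nu$ are exactly the right moves to identify $C^\circ$; the paper packages the same identification in one sentence (``these functions $f$ are exactly the $\mu$-minimizable Lagrangians'') by citing Theorem~\ref{thm:mathertimedependent}.
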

\begin{proof}
 First we see that if $\eta$ is a distribution of order $m$ and if we let $W\subset T\R^n\times I$ be an open and bounded neighborhood of the compact set $\supp\mu$, then we can apply Lemma \ref{lem:boundednormability} with $\gamma=2|\eta|'_{\overline W,m}$. The lemma tells us that if we take $X$ to be the subset of $\mathscr E'$ consisting of compactly-supported distributions $\xi$ of order $\leq m$ and with norm $|\xi|'_{\overline W,m}< \gamma$, then the seminorm $|\cdot|'_{\overline W,m}$ is nondegenerate on $X$ and determines the weak* topology there. 
 
 Looking to apply Theorem \ref{thm:differentiablecurves}, set $V=\mathscr E'$, $C=\mathscr H$, $p=\mu$, and $v=\eta$, and from the theorem we will get $c(t)=\mu_t$. Note that the set $X$ contains all Radon measures $\nu$ with $|\nu|'_{\overline W,m}< \gamma$. It thus contains the intersection of the tangent cone of $C=\mathscr H$ at $p=\mu$ with the open subset of $\mathscr E'$ consisting of all distributions $\xi$ with $|\xi|'_{\overline W,m}< \gamma$, an open subset that also contains $v=\eta$. 
 
 Thus Theorem \ref{thm:differentiablecurves} can be applied, and in this case it tells us that a family $(\mu_t)_{t\geq 0}\subseteq \mathscr H$ with $\mu_0=\mu$ and $d\mu_t/dt|_{t=0}=\eta$ exists if, and only if, $\langle \eta, f\rangle\geq 0$ for all $f\in C^\infty(T\R^n\times I)=(\mathscr E')*$ such that $\langle \mu,f\rangle\leq \langle\nu,f\rangle$ for all $\nu\in \mathscr H$. By Theorem \ref{thm:mathertimedependent}, these functions $f$ are exactly the $\mu$-minimizable Lagrangians.
\end{proof}

\subsection{Examples}\label{sec:examplesvariations}

\begin{defn}\label{def:critical} 
 For $L\in C^k(\R^n\times I)$ is any smooth Lagrangian, its \emph{action} is the functional $A_L\colon\mathscr H\to\R$ given by $\nu\mapsto \int L\,d\nu$. The action $A_L$ is differentiable in the sense that for every family $(\mu_t)_{t\geq 0}\subset \mathscr H$ such that $d\mu_t/dt|_{t=0^+}$ exists and is equal to a distribution $\eta\in \mathscr E'$ of order $k$, we have 
 \[\left.\frac{d}{dt}A_L(\mu_t)\right|_{t=0^+}=\langle \eta,L\rangle.\] 
 
 If $\Theta\subseteq \mathscr E'$ is a set of distributions and $\mu$ is a measure in $\mathscr H$, we say that \emph{$\mu$ is $\Theta$-critical for the action of $L$} if, for all $\eta\in\Theta$ such that $\mu$ can be deformed in the direction $\eta$, we have $\langle \eta,L\rangle=0$.
\end{defn}

\begin{example}\label{ex:horizontal}
 A first example is given by the \emph{(velocity-independent) horizontal variations}, which are constructed as follows. Take a smooth vector field $P\colon \R^n\times I\to T(\R^n\times I)$ vanishing at times $0$ and $t_0$. Let $\Psi_s\colon \R^n\times I \to \R^n\times I$ be its flow. Assume that the differential of the flow, $D\Psi_s\colon T(\R^n\times I)\to T(\R^n\times I)$, preserves the set $T\R^n\times I\times \{\mathbf 1\}$. Let
 \begin{align*}
  &p\colon T\R^n\times I\times \{\mathbf 1\}\to T\R^n\times I\\
  &p(x,v,t,\mathbf 1)=(x,v,t).
 \end{align*}
 Then, given a measure $\mu\in \mathscr H$, the measure $\mu_s=p_*(D\Psi_s)^*p^*\mu$ is also in $\mathscr H$. The derivative at $s=0$ of the family $(\mu_s)_{s\in \R}$ is $\eta=(p_*DP)\mu$ (here, $p_*DP$ is a vector field on $TR^n\times I$, and $(p_*DP)\mu$ is the derivative of $\mu$ in the direction of $DP$, in the sense of distributions). Thus we know that $\mu$ can be deformed in the direction $\eta$. 
 
 Since, for any $\mu$-minimizable Lagrangian $\varphi$, the map $s\mapsto \langle\mu_s,\varphi\rangle$ is smooth, and reaches its minimum at $s=0$, the condition in Theorem \ref{thm:variations} is verified. 
 
 Horizontal variations will be further investigated in Section \ref{sec:horizontalvariations}.
\end{example}

\begin{example}\label{ex:vectorfieldvariations}
Our main example is a broad generalization of Example \ref{ex:horizontal}:
\begin{coro}\label{coro:vectorfieldvariations}
 Let $\mu\in\mathscr H$ be a measure closed in the open set $U\subset \R$, and let $X\colon T\R^n\times I\to T\R^n\times I$ be a Borel-measureable vector field supported in $TU\times I$.  
 Let $\eta_X$ be the distribution given by
 \[\langle\eta_X,\varphi\rangle=\int_{T\R^n\times I}X\varphi\,d\mu,\quad\varphi\in C^\infty(T\R^n\times I).\]
 Assume that, for all $\phi\in C^\infty(\R^n\times I)$, and with $\sigma(x,v,t)=((x,t)(v,\mathbf 1))$,
 \begin{equation}\label{eq:closednesspreserved}
  \langle\eta_X,d\phi\rangle=\int_{T\R^n\times I} X(d\phi\circ \sigma )\,d\mu=0.
 \end{equation}
 Then  $\mu$ can be deformed in the direction $\eta_X$.
\end{coro}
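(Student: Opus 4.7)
By Theorem \ref{thm:variations}, it suffices to verify that $\langle \eta_X,\varphi\rangle=\int X\varphi\,d\mu \geq 0$ for every $\mu$-minimizable Lagrangian $\varphi$. So fix such a $\varphi$ with Lipschitz witness $f\colon\R^n\times I\to\R$ as in Definition \ref{def:minimizableL}, and write $df^\sigma(x,v,t):=f_x(x,t)\cdot v+f_t(x,t)$ for the associated function on $T\R^n\times I$, defined $\mu$-almost everywhere because $f$ is differentiable on $\pi(\supp\mu)$. On the differentiability set we have $\varphi\geq df^\sigma$, and on $\supp\mu$ we have equality $\varphi=df^\sigma$. My plan is in fact to prove the stronger assertion $\int X\varphi\,d\mu=0$ by splitting the integral as $\int X(\varphi-df^\sigma)\,d\mu+\int X(df^\sigma)\,d\mu$ and showing that each summand vanishes.

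For the second summand, I would approximate $f$ by smooth functions $f_\varepsilon$ (for instance by mollification, after extending $f$ to all of $\R^n\times\R$). For each $\varepsilon>0$, the hypothesis \eqref{eq:closednesspreserved} applied to $\phi=f_\varepsilon$ gives $\int X(df_\varepsilon^\sigma)\,d\mu=0$, and one then passes to the limit. The delicate point is that $X(df_\varepsilon^\sigma)$ formally involves second derivatives of $f_\varepsilon$ (whenever $X$ has a component tangent to $M$), which generally blow up as $\varepsilon\to 0$ for merely Lipschitz $f$. This is where I would invoke Lemma \ref{lem:thatextraregularity}, which upgrades the regularity of $df$ on a neighborhood of $\pi(\supp\mu)$ just enough to make the mollified derivatives $df_\varepsilon$ converge in a strong sense near $\supp\mu$, where $\mu$ actually sees them, so that dominated convergence yields $\int X(df^\sigma)\,d\mu=0$.

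For the first summand, the underlying principle is elementary: the function $\psi:=\varphi-df^\sigma$ is nonnegative wherever defined and vanishes identically on $\supp\mu$, so each point of $\supp\mu$ is a pointwise minimum of $\psi$. If $\psi$ were $C^1$ near $\supp\mu$, then every directional derivative $X\psi$ would vanish on $\supp\mu$, giving $\int X\psi\,d\mu=0$ directly. For the Lipschitz case I would again use Lemma \ref{lem:thatextraregularity} together with a Lebesgue-point / approximation argument to conclude that $X(\varphi-df^\sigma)=0$ holds $\mu$-almost everywhere; the fact that $X$ is only Borel is not a real obstacle here since $\mu$ is a Radon measure and the identity only needs to hold $\mu$-a.e.

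Combining the two claims gives $\int X\varphi\,d\mu=0\geq 0$, and Theorem \ref{thm:variations} then produces the required family $(\mu_s)_{s\geq 0}$ deforming $\mu$ in direction $\eta_X$. The principal obstacle is the incompatibility between the Lipschitz-only regularity of $f$ and the need to apply the vector field $X$ to $df^\sigma$: both claims above would be trivial if $f$ were $C^2$ on a neighborhood of $\pi(\supp\mu)$, and the crux of the argument is to harness the extra regularity afforded by Lemma \ref{lem:thatextraregularity} to patch this gap precisely where $\mu$ is concentrated.
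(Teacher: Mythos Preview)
Your proposal is correct and follows essentially the same route as the paper: reduce to Theorem \ref{thm:variations}, split $\langle\eta_X,\varphi\rangle=\langle\eta_X,\varphi-df\rangle+\langle\eta_X,df\rangle$, kill the first summand via the minimum argument together with Lemma \ref{lem:thatextraregularity}, and kill the second via the hypothesis \eqref{eq:closednesspreserved}. The only difference is that you are more explicit about the passage from smooth $\phi$ to the Lipschitz witness $f$ in the second summand (mollification plus the extra regularity of Lemma \ref{lem:thatextraregularity}), whereas the paper simply writes $\langle\eta_X,df\rangle=0$ without spelling out that limiting step; note also that Lemma \ref{lem:thatextraregularity} gives directional differentiability of $df$ only $\mu$-almost everywhere rather than on a full neighborhood, so your phrasing ``on a neighborhood of $\pi(\supp\mu)$'' slightly overstates what is available.
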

This Corollary tells us that $\mu$ can be flowed in the direction of $X$ if, and only if, $X$ preserves the closedness condition \eqref{eq:closedness}, which is the content of condition \eqref{eq:closednesspreserved}. 
Some consequences of Corollary \ref{coro:vectorfieldvariations} will be explored in Section \ref{sec:invariant} and Example \ref{ex:verticalvariations}.
\begin{proof}[Proof of Corollary \ref{coro:vectorfieldvariations}]
 This is a corollary to Theorem \ref{thm:variations}. The necessity of condition \ref{eq:closednesspreserved} is clear; let us prove its sufficiency. With the notations of Definition \ref{def:minimizableL}, for every $\mu$-minimizable Lagrangian $\varphi$, we have 
 \[
  \langle\eta_X,\varphi\rangle=\langle\eta_X,\varphi-df\rangle+\langle\eta_X,df\rangle=\langle\eta_X,\varphi-df\rangle,
 \]
 which vanishes since $\varphi-df\geq 0$ reaches its minimum everywhere on the support of $\mu$, so its derivative (which exists $\mu$-almost everywhere by the Lemma \ref{lem:thatextraregularity} below with $Y=(X,0)$) must be 0. %
\end{proof}

\begin{lem}\label{lem:thatextraregularity}
 Let $\mu\in \mathscr H$ be a measure on $T\R^n\times I$ closed in $U\subset \R^n$, and let $\varphi$ be a $\mu$-minimizable Lagrangian. Then, at $\mu$-almost every point $(x,v,t)\in T\R^n\times I$ and for every vector $Y\in T_{(x,v,t,\mathbf 1)}(T(\R^n\times I))$, the limit
 \[\lim_{\varepsilon\to 0}\frac{df((x,t, v,\mathbf 1)+\varepsilon Y)-df(x,t,v,\mathbf 1)}{\varepsilon}\]
 exists and is finite.
\end{lem}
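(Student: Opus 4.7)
Set $u(x,v,t) := \varphi(x,v,t) - df_{(x,t)}(v,\mathbf{1})$ on the set where $f$ is differentiable; by Definition~\ref{def:minimizableL}, $u \geq 0$ on its domain and $u = 0$ on $\supp\mu$. Since $\varphi$ is smooth with well-defined directional derivative $Y\varphi$ in every direction $Y$, the existence of the directional derivative of $df$ at $(x,t,v,\mathbf{1})$ in direction $Y$ is equivalent to the existence of $\lim_{\varepsilon\to 0} u((x,v,t)+\varepsilon Y)/\varepsilon$. The value of such a limit is forced: the $\varepsilon>0$ and $\varepsilon<0$ portions of this ratio have opposite signs (since $u \geq 0$ with $u = 0$ at the base point), so if the two-sided limit exists it must vanish. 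Moreover, differentiating the non-negative smooth function $v'\mapsto u(x,v',t)$ at its minimizer $v'=v$ yields $f_x(x,t) = \varphi_v(x,v,t)$ and $f_t(x,t) = \varphi(x,v,t)-\varphi_v(x,v,t)\cdot v$ on $\supp\mu$, so that on $\pi(\supp\mu)$ the first derivatives of $f$ are recovered from smooth data. The real content of the lemma is therefore the \emph{existence}, not the value, of the directional derivative.

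For existence, I plan to use the decomposition in Remark~\ref{rmk:approximability}: $\mu$ is a superposition of curve measures $\mu_\gamma$ coming from absolutely continuous curves $\gamma$. Along each $\gamma$, the composition $t \mapsto f(\gamma(t),t)$ is Lipschitz and at almost every $t$ has derivative $df_{(\gamma(t),t)}(\gamma'(t),\mathbf{1}) = \varphi(\gamma(t),\gamma'(t),t)$, which is smooth in $t$; this promotes the existence of the directional derivative of $df$ along the curve-tangent direction at density points. Differentiability in the purely vertical (fiber and time-velocity) directions is immediate from the linearity of $df$ in the tangent variable. A Lipschitz-in-$Y$ extension based on the uniform bound $|u(p+\varepsilon Y) - u(p+\varepsilon Y')| \leq C\varepsilon|Y-Y'|$ on difference quotients then passes from derivatives in a countable dense set of directions to derivatives in every $Y$.

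The principal obstacle is the transverse case. Because $\mu$ can be singular with respect to Lebesgue measure on $\R^n \times I$, Rademacher's theorem applied to the Lipschitz function $f$ supplies differentiability of $df$ only at Lebesgue-almost-every $(x,t)$, not automatically at $\pi_*\mu$-almost-every point. Overcoming this should exploit the structural identity $f_x = \varphi_v$ on $\supp\mu$ to inherit a modulus of continuity from the smooth function $\varphi_v$ via the continuous map $\supp\mu \to \R^n \times I$, $(x,v,t)\mapsto(x,\varphi_v(x,v,t),t)$, together with the closedness condition~\eqref{eq:closedness}, which rules out concentration of $\mu$ transverse to the curve-tangent directions where the argument above already provides regularity.
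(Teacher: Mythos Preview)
Your proposal has a genuine gap: the transverse case, which you correctly identify as the principal obstacle, is not resolved. Knowing that $f_x=\varphi_v$ on $\supp\mu$ tells you only that $df$ restricted to $\pi(\supp\mu)$ agrees with smooth data; it says nothing about $df$ at nearby points $(x+\varepsilon Y_x,\,t+\varepsilon Y_t)$ lying \emph{off} $\pi(\supp\mu)$, which is exactly what a transverse directional derivative requires. The appeal to closedness of $\mu$ to ``rule out concentration transverse to the curve-tangent directions'' is too vague to function as an argument: closedness constrains $\mu$, not $f$, and does not by itself supply second-order information about $f$ away from the projected support. Your curve-tangent step is also incomplete: the curves $\gamma$ in the decomposition need not be $C^2$, so the lifted tangent direction $(\gamma',1,\gamma'',0)$ may not even exist, and in any case the regularity of $t\mapsto f(\gamma(t),t)$ is a statement about the \emph{first} derivative of $f$ along the curve, not about differentiability of $df$.

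The paper avoids all of this with an integration-by-parts (Stokes) trick that you do not use. For a cut-off $\chi$ vanishing near the time boundary and a smooth family of diffeomorphisms $\Phi_\varepsilon$ of $\R^n\times I$, one writes
\[
\big\langle \chi\mu,\; d(f\circ\Phi_\varepsilon)-df\big\rangle
=\big\langle \partial(\chi\mu),\; f\circ\Phi_\varepsilon-f\big\rangle.
\]
On the right-hand side the difference quotient involves only $f$ itself, and $\partial(\chi\mu)$ is a signed measure supported in $\pi(\supp\mu)$, where $f$ is already known to be differentiable (Definition~\ref{def:minimizableL}). Thus the second-order question about $df$ is converted into a first-order question about $f$ at points where that information is available; dividing by $\varepsilon$ and letting $\varepsilon\to 0$ gives the limit. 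Linearity of $df$ on fibres then handles the vertical directions. The missing idea in your plan is precisely this Stokes-type reduction from $d(f\circ\Phi_\varepsilon)$ to $f\circ\Phi_\varepsilon$; without it there is no mechanism in the hypotheses to access the transverse behaviour of $df$.
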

\begin{proof}
 Let $\chi\colon T\R^n\times I\to\{0,1\}$ be a measurable function vanishing on a neighborhood of $T\R^n\times \{0,t_0\}$. The boundary $\partial(\chi\mu)$ of the current induced by the measure $\chi\mu$ is itself a signed measure $\nu$ on $\R^n\times I$, that is,  $\nu=\partial(\chi\mu)$ is a signed measure (supported away from $\supp\partial\mu$) that satisfies, for all $g\in C^\infty(\R^n\times I)$, 
 \[\int_{T\R^n\times I}\chi(x,v,t) dg_{(x,t)}(v,\mathbf 1)\,d\mu(x,v,t)=\int_{\R^n\times I} g(x,t)\,d\nu(x,t). \]
 To see why, note that this is true for measures induced by curves, and by the results of \cite{federerrealflat} we know that all measures in $\mathscr H$ can be approximated by those.
 
 We know that $f$ is differentiable on $\pi(\supp\mu)\setminus (T\R^n\times \{0,t_0\})$ from Definition \ref{def:minimizableL} (which is motivated by item \ref{it:fisdifferentiable} of Theorem \ref{thm:mathertimedependent}). This in turn implies the existence of $\lim_{\varepsilon\searrow 0}\frac1\varepsilon(f\circ \Phi_\varepsilon-f)$ on $\supp\nu\subset\pi(\supp\mu)\setminus (T\R^n\times \{0,t_0\})$
 for every smooth family of diffeomorphisms $\Phi_s\colon \R^n\times I\to\R^n\times I$, $0\leq s\leq 1$, with $\Phi_0=\id$. Thus we have 
 \begin{multline*}
  \lim_{\varepsilon\searrow 0}\frac1\varepsilon\langle\chi\mu,d(f\circ \Phi_\varepsilon)-df\rangle=\lim_{\varepsilon\searrow 0}\frac1\varepsilon\langle\partial(\chi\mu),f\circ \Phi_\varepsilon-f\rangle=\\
  \left\langle\nu,\lim_{\varepsilon\searrow 0}\frac1\varepsilon(f\circ \Phi_\varepsilon-f)\right\rangle=\left\langle \nu,df\circ \left.\frac{d\Phi_\varepsilon}{d\varepsilon}\right|_{\varepsilon=0}\right\rangle.
 \end{multline*}
 
 This, together with the fact that $df$ is linear on each fiber of $T\R^n\times I$ implies the statement of the lemma.
\end{proof} 
\end{example}

\begin{example}\label{ex:verticalvariations}
 Let $\mu\in\mathscr H$.
 Let $\mathcal H=L^2(T\R^n\times I,\R^n;\mu)$ be the Hilbert space of measurable vector fields $X\colon T\R^n\times I\to \R^n$ %
 with $\int |X|^2 d\mu<+\infty$. 
 We extend $X\in \mathcal H$ to a vectorfield $\tilde X$ on $T(\R^n\times I)$ by letting $\tilde X(x,v,t,\tau)=(0,X(x,v,t),0,0)$.
 Thus, when acting on a function $\phi\in C^\infty(T\R^n\times I)$, $\tilde X\phi=\phi_v\cdot X$.
 For $X\in \mathcal H$, let $\eta_X$ be the distribution given by 
 \[\langle\eta_X,\varphi\rangle=\int \tilde X\varphi\,d\mu=\int d\varphi(\tilde X)\,d\mu,\quad \varphi\in C^\infty(T\R^n\times I).\]
 It follows from Corollary \ref{coro:vectorfieldvariations} that $\mu$ can be deformed in the direction $\eta_X$ if, and only if, for all $\phi\in C^\infty(\R^n\times I)$ we have
 \begin{equation}\label{eq:holonomicitysmall}
  \langle\eta_X,d\phi\rangle=0.
 \end{equation} 

 Let us consider $\Theta_1$-criticality with $\Theta_1=\{\eta_X:X\in\mathcal H\}$. We introduce a piece of notation: for $u\in C^\infty(\R^n\times I)$, we let $\nabla u$ denote the vector field that satisfies $\nabla u\cdot X=Xu$ for all $X$, so that $\nabla u$ is the gradient in the $v$ directions; in coordinates, $\nabla u=(\partial u/\partial v_1,\dots, \partial u/\partial v_n)$. 
 With Definition \ref{def:critical}, for $L\in C^\infty(\R^n\times I)$ and $\mu\in \mathscr H$, we see that $\mu$ is $\Theta_1$-critical for the action of $L$ if $0=\langle \eta_X,L\rangle=\int \tilde XL\,d\mu=\int \nabla L\cdot X\,d\mu$ for all $X\in \mathscr H$ that satisfies $0=\langle\eta_X,d\phi\rangle=\int \nabla d\phi\cdot X\,d\mu$ for all $C^\infty(\R^n\times I)$. Note that both $\nabla L$ and $\nabla d\phi$ are elements of $\mathcal H$. Thus, the vector field $X$ must be contained in the perpendicular space to the linear space of gradients of exact forms $\nabla d\phi$, within $\mathcal H$, while $\nabla L$ must be contained in the perpendicular space to $X$; in other words, $\nabla L\in (\nabla d C^\infty(\R^n\times I))^{\perp\perp}=\overline{\nabla d C^\infty(\R^n\times I)}$. This means that \emph{the momenta $\partial L/\partial v$ coincide with an exact form (with only $L^2$-regularity) if, and only if, $\mu$ is $\Theta_1$-critical for the action of $L$}.
\end{example}

\begin{example}\label{ex:transpositional}
 The following example works only in the time-independent setting, so we will drop the $I$ factor altogether; the theory can be adapted easily. See also \cite{myminimizable} for more details on this context.

 Let $\mu$ be a measure in $\mathscr H$, and let $\eta_p\in\mathscr E'$ be the distribution given by 
 \[\eta_p=\delta_p-v\cdot\partial_v\delta_p-\frac{1}{\mu(T\R^n)}\mu\]
 for some point $p=(x,v)\in \supp\mu\subset T\R^n$, $\delta_p$ the Dirac delta at $p$, $v\cdot \partial_v\delta_p$ the directional derivative in the fiberwise direction $v$ at $p$, and $\partial_t\delta_p$ the derivative in the time direction at $p$. %
 In other words, for $u\in C^\infty(T\R^n)$, $\eta_p$ acts as
 \[\langle\eta_p,u\rangle=u(p)-v\cdot\frac{\partial u}{\partial v}(p)-\frac{1}{\mu(T\R^n)}\int_{T\R^n} u\,d\mu.\]
 If $\varphi$ is a $\mu$-minimizable Lagrangian  (but is time-independent), we have
 \[\langle\eta_p,\varphi\rangle=\varphi(p)-v\cdot\frac{\partial\varphi}{\partial v}(p)-\frac{1}{\mu(T\R^n)}\int_{T\R^n}\varphi\,d\mu=0,\]
 because $\varphi$ and $v\cdot\partial \varphi/\partial v$ both coincide on $\supp\mu$ and $\int\varphi\,d\mu=0$,
 so the condition in Theorem \ref{thm:variations} is satisfied, and $\mu$ can be deformed in the direction $\eta_p$.
 
 Let us see what $\Theta_2$-criticality means for $\Theta_2=\{\eta_p:p\in T\R^n\}$. If $L\in C^\infty(T\R^n)$, $\mu$ is critical for the action of $L$ if, and only if, for all $p=(x,v,t)\in\supp\mu$, we have
 \begin{equation}\label{eq:energyconservation}
  0=\langle\eta_p,L\rangle=L(x,v,t)-v\frac{\partial L}{\partial v}(x,v,t)-\frac{1}{\mu(T\R^n)}\int L\,d\mu.
 \end{equation}
 Here, we may identify $\partial L/\partial v$ as the momentum, and the expression
 \[v\frac{\partial L}{\partial v}-L\]
 as the Hamiltonian, so that $\Theta_2$-criticality is equivalent to a sort of energy conservation law \eqref{eq:energyconservation}.
 
 Recall that energy conservation was already known for minimizers \cite[Corollary 4]{myminimizable} and for invariants under the flow of the Hamiltonian vector field. The difference here is that measures that satisfy $\Theta_2$-criticality are not necessarily minimizers, and the Hamiltonian may not be defined in this context (because we do not ask for $L$ to be convex).
\end{example}

\begin{example}\label{ex:mather}
 For a point $p=(x,v,t)\in T\R^n\times I$ and a vector $w\in T_x\R^n$, let $\eta_{p,w}\in\mathscr E'$ be the distribution given by
 \[\eta_{p,w}=-\partial_w^2\delta_p,\]
 so that for $u\in C^\infty(T\R^n\times I)$ the distribution $\eta$ acts as
 \[\langle\eta,u\rangle=-\frac{d^2u}{\partial v^2}(p)(w,w)=-\frac{d^2u}{\partial w^2}(p).\]
 In other words, $\langle\eta,u\rangle$ is minus the Hessian of the restriction of $u$ to $T_{x}\R^n\times \{t\}$ evaluated (as a quadratic form) on $w$. 
 
 We consider the question of the meaning of $\Theta_3$ criticality for $\Theta_3=\{\eta_{p,w}:p\in T\R^n\times I,\, w\in T_{\pi(p)}\R^n\}$. This is especially interesting when we know that $L$ has positive-definite %
 fibrewise Hessian in a certain region, because in that case $\langle \eta_{p,w},L\rangle< 0$, so in order for $\mu$ to be $\Theta_3$-critical for the action of such $L$, it would be necessary that $\mu$ \emph{cannot} be deformed in the direction $\eta_{p,w}$.
 In order for the latter to happen, by Theorem \ref{thm:variations} it follows that it is necessary that there exist a $\mu$-minimizable Lagrangian $\varphi$ that obstructs the deformability, in the sense that $\langle\eta_{p,w},\varphi\rangle<  0$.
 This would mean that, if $f$ is as in Definition \ref{def:minimizableL}, we would have
 \[0> \langle\eta_{p,w},\varphi\rangle=\langle\eta_{p,w},\varphi-df\rangle=-\frac{\partial^2}{\partial w^2}(\varphi-df)(p).\]
 Note that $\supp\mu$ is contained in the zeroes of $\varphi-df$, while those zeroes are contained in the zeroes of $\partial(\varphi-df)/\partial w$ since $\varphi\geq df$.
 By item \ref{it:lipschitzity} of Theorem \ref{thm:mathertimedependent}, $df|_{\supp\mu}$ can be extended to a locally Lipschitz form $\varsigma$ on $\R^n\times (0,t_0)$ (which may, however, not be exact), and after taking such an extension, $\supp\mu$ continues to be contained in the zeroes of $\varphi-\varsigma$ and of $\partial(\varphi-\varsigma)/\partial w$. Since 
 \[\frac{\partial^2}{\partial w^2}(\varphi-\varsigma)(p)= 
  \frac{\partial^2}{\partial w^2}(\varphi-df)(p)\neq 0
 \]
 and since $\partial(\varphi-\varsigma)/\partial w$ is smooth on the fibers, the implication is, by \cite[Theorem 1.3]{wuertz}, that the support of $\mu$ must be contained in a locally Lipschitz hypersurface of $T\R^n\times (0,t_0)$. Applying this reasoning to several, linearly-independent vectors $w$, we conclude that \emph{the support of $\mu$ must be contained in the graph of a section $\varsigma\colon \R^n\times I\to T\R^n\times I$ that is locally Lipschitz on $\R^n\times(0,t_0)$}. Otherwise $\mu$ would be deformable in the direction $\eta_{p,w}$ for some $p$ and $w$, and $\mu$ would not be $\Theta_3$-critical for the action of $L$.
 
 We observe that the same applies for negative-definite Lagrangians $L$, replacing $\eta_{p,w}$ with $-\eta_{p,w}$.
 
 We finish with the remark that deforming in the direction $\eta_{p,w}$ is a sort of infinitesimal analogue of what Mather did in his paper \cite[Lemma in \S4]{matheractionminimizing91}. In that paper, he replaces a pair of curves $\alpha$ and $\beta$ with a new pair of curves $a$ and $b$, cleverly constructed so that the velocity of $a$ and $b$ approaches the average of the velocities of the original curves $\alpha$ and $\beta$. That is,
 \[a'\approx\tfrac12(\alpha'+\beta')\approx b'.\]
 Thus, in this case the difference of the action of the pairs $(\alpha,\beta)$ and $(a,b)$ is approximately
 \[L(\alpha')+L(\beta')-2L(\tfrac12(\alpha'+\beta')).\]
 Taking $w=\alpha'-\beta'$, this reminds us a lot of the classical approximation of the second derivative,
 \[\frac{\partial^2 L}{\partial w^2}(p)=\lim_{\delta\to 0}\frac{L(p+\delta w)+L(p-\delta w)-2L(p)}{\delta^2}.\]
 Whence it is not so surprising that the regularity that Mather found is precisely the consequence of $\Theta_3$-criticality.
\end{example}

\section{Invariant measures}
\label{sec:invariant}
\subsection{The Euler-Lagrange flow and the definition of invariance}
\label{sec:eulerlagrangeflow}

As in Section \ref{sec:characterizationvariations}, let $U$ be an open subset of $\R^n$, $I=[0,t_0]$ a time interval, and $\mathscr H$ the space of measures closed in $U$. We will generally denote points in $TU\times I$ as $(x,v,t)$, where $x\in \R^n$, $v\in T_xU\cong\R^n$, and $t\in I$. 

Let $L\in C^2(TU\times I)$ be an arbitrary function. We will assume that the derivatives of $L$ of degree 2 are locally Lipschitz. 
Recall that a $C^2$ curve $\gamma\colon I\to\R^n$ satisfies the Euler-Lagrange equations if
\begin{align*}
 L_x(\gamma(t),\gamma'(t),t)&=\frac{d}{dt}L_v(\gamma(t),\gamma'(t),t),\quad t\in (0,t_0).
 \end{align*}
 which we can rewrite as
 \begin{align*}
  L_x&=L_{vt}+L_{xv}\gamma'(t)+L_{vv}\gamma''(t).
\end{align*}
This equation can be solved for $\gamma''$ if $L_{vv}$ is invertible; this inspires the following definition.

\begin{defn}[Euler-Lagrange flow]\label{def:ELflow}
 Assume that $L_{vv}$ is an invertible matrix everywhere on an open set  $V\subseteq TU\times I$.
 The \emph{Euler-Lagrange vector field} on $V$ is given by
 \begin{align*}
  \mathcal X(x,v,t)&=\frac{\partial}{\partial t}+v\frac{\partial}{\partial x}+(L_x-L_{vt}-L_{xv}v)L_{vv}^{-1}\frac{\partial}{\partial v}\\
  &\notag=\frac{\partial}{\partial t}+\sum_iv_i\frac{\partial}{\partial x_i}+\sum_{i,k}\Big(L_{x_i}-L_{v_it}-\sum_jv_jL_{x_jv_i}\Big)(L_{vv}^{-1})_{ik}\frac\partial{\partial v_k}.
 \end{align*}
 Since the second derivatives of $L$ are locally Lipschitz and $L_{vv}$ is invertible everywhere on $V$, $\mathcal X$ is locally Lipschitz too. The flow $\Phi_s$ of $\mathcal X$ is thus locally well defined for small times $s\in \R$ (yet it may be an incomplete flow), and we will call it \emph{Euler-Lagrange flow} on $V$. It satisfies 
 \[\Phi_0=\mathrm{identity}\quad\textrm{and}\quad\frac{d}{dt}\Phi_s(x,v,t)=\mathcal X\circ \Phi_s(x,v,t)\]
 for all $(x,v,t)\in T\R^n\times I$ and all $s\in \R$ for which it is defined.
\end{defn}

\begin{defn}[Invariant]\label{def:invariant}
 A compactly-supported, positive, Radon measure $\mu\in \mathscr H$ on $TU\times I$ is \emph{invariant} under the Euler-Lagrange flow if for all compactly-supported $\varphi\in C_c^\infty(TU\times I)$ we have 
 \[\int\mathcal X\varphi\,d\mu=0.\]
\end{defn}

Note that if the flow $\Phi_s$ is complete, $\mu$ being invariant is equivalent to having, for all $s\in \R$, $\Phi_s^*\mu=\mu$. The equivalence follows from
\[\frac d{ds}\int \varphi\,d(\Phi_s^*\mu)=\frac d{ds}\int(\Phi_s)_*\varphi\,d\mu=\frac d{ds}\int\varphi\circ\Phi_s^{-1}\,d\mu=\int-\mathcal X\varphi \,d\mu.\]

\subsection{Impossibility examples}
\subsubsection{Impossibility of a purely variational characterization}
\label{sec:impossiblityexample}

In the following, we will seek a variational characterization of measures invariant under the Euler-Lagrange flow. That is to say, we would like to find a class $\Omega$ of distributions such that if $\mu$ is $\Omega$-critical for $L$, then $\mu$ is invariant under the Euler-Lagrange flow. The following example shows that this cannot be achieved in general.

\begin{example}\label{ex:noninvariantminimum}
 Let $U=(0,1)\times(0,1)$.
 Let $h\colon\R^2\to\R$ be a nonnegative function vanishing only at the points $(1,1)$ and $(1,-1)$, where we assume that $\det L_{vv}>0$. For $(x,v,t)\in T\R^2\times I$, $I=[0,1]$, let $L(x,v,t)=h(v)$, $V=\{(x,v,t)\in T\R^2\times I:\det h_{vv}\neq0\}$, and let $\mu$ be the measure 
 \[\mu=\tfrac12(\delta_{((t,0),(1,1),t)}+\delta_{((t,0),(1,-1),t)})\otimes \lebesgue_{[0,1]},\] 
 where $\lebesgue_{[0,1]}$ stands for the Lebesgue measure on $I$. In other words, for any measurable function $\varphi$ on $T\R^2\times I$ we have 
 \[\int_{T\R^2\times I}\varphi\,d\mu=\tfrac12\int_0^1\varphi((t,0),(1,1),t)+\varphi((t,0),(1,-1),t)\,dt.\]
 In particular, $\int_{T\R^2\times I}L\,d\mu=\frac12\int_0^1h(1,1)+h(1,-1)\,dt=0$. Thus, within $\mathscr H$, $\mu$ is a minimizer of the action of $L$.
 
 The importance of this example is that \emph{$\mu$ is not invariant under the Euler-Lagrange flow, although it satisfies
 $\frac d{dt} \int L\,d\mu_t|_{t=0+}\geq0$ for every variation $(\mu_t)_{t\geq 0}$ of $\mu$}. An immediate consequence is that it would be impossible to characterize invariant measures by purely variational means, since there are non-invariant measures that are critical with respect to all possible variations (in the example this criticality happens because the measure is globally minimal).
 
 To see that $\mu$ is not invariant under the Euler-Lagrange flow, we write down the corresponding vector field explicitly for $(x,v,t)\in\supp\mu$:
 \[\mathcal X(x,v,t)=\frac{\partial}{\partial t}+v_1\frac{\partial}{\partial x_1}+v_2\frac{\partial}{\partial x_2}.\]
 This follows from the fact that $L=h(v)$ satisfies $L_x=L_{vt}=L_{xv}=0$.
 Take $\varphi\in C^\infty_c(TU\times I)$ vanishing at $t=0,1$; then 
 \begin{align*}
  \int \mathcal X\varphi\,d\mu&=\tfrac12\int_0^1\varphi_t((t,0),(1,1),t)+\varphi_t((t,0),(1,-1),t)\,dt\\
  &\quad+\tfrac12\int_0^1\varphi_{x_1}((t,0),(1,1),t)+\varphi_{x_1}((t,0),(1,-1),t)\,dt\\
  &\quad+\tfrac12\int_0^1\varphi_{x_2}((t,0),(1,1),t)-\varphi_{x_2}((t,0),(1,-1),t)\,dt.
 \end{align*}
 While the first term (involving the integral of $\varphi_t$) vanishes by the fundamental theorem of calculus and the fact that $\varphi$ vanishes at the endpoints, the other two terms do not vanish for general $\varphi$. So indeed $\mu$ is not invariant under the Euler-Lagrange flow.
\end{example}
\begin{rmk}
 If the Lagrangian $L$ is strictly convex in the fibers, all minimal measures in $\mathscr H$ are invariant under the Euler-Lagrange flow; see \cite{contrerasiturriagabook} or Section \ref{sec:manethm}.
\end{rmk}

\subsection{Closed velocities}
\label{sec:closedvelocities}

 Although, as shown in the previous section, a purely variational characteriztion of invariance under the Euler-Lagrange flow does not exist, it does exist if we restrict to an appropriate class of measures. For this purpose we have found a very large class that includes all (compactly-supported) measures that are invariant under the Euler-Lagrange flow, for any Lagrangian $L$; the class is defined by either of the equivalent properties of the following proposition --- see Definition \ref{def:closedderivatives} below.
 
 \begin{prop}\label{prop:closedvelocities}
  Let $U$ be an open subset of $\R^n$, $I=[0,t_0]$ a time interval, and $\mathscr H$ the space of measures closed in $U$. Let $(x,v,t)$ be coordinates on $T\R^n\times I$, with $x\in\R^n$, $v\in T_x\R^n$, $t\in I$.%
 
 Let $\mu\in \mathscr H$ be a closed measure. 
 Then either all or none of the following are true:
 \begin{enumerate}
  \item\label{it:closedvelocityexists} There exists a function $C\colon TU\times I\to \R^n$ such that, for functions $f\in C^\infty(TU\times I)$ that vanish 
  on $t=0$ and $t=t_0$,
  we have
  \[
   \int df_{(x,v,t)}\left(v\frac{\partial}{\partial x}+C(x,v,t)\frac{\partial}{\partial v}+\frac\partial{\partial t}\right)\,d\mu=0.
  \]
  
  \item\label{it:closedvelocitylifting} There exists a compactly-supported measure $\tilde \mu$ on $T(T\R^n\times I)$ such that
   \begin{enumerate}
    \item\label{it:higherclosed} $\tilde\mu$ is closed on $TU\times I$, that is, such that
    \[\int_{\R^n\times I} df\,d\tilde\mu=0\]
    for all $f\in C^\infty(T\R^n\times I)$ vanishing on the boundary $T\R^n\times \{0,t_0\}$,
    \item\label{it:holonomicset} $\tilde\mu$ is supported on the set of points $(x,v,t,\tilde x,\tilde v,\tilde t)\in T(T\R^n\times I)$ (where we use the notations $v,\tilde x\in T_x\R^n\cong\R^n$, $\tilde v\in T_vT_x\R^n\cong\R^n$, $\tilde t\in T_tI\cong \R$) such that 
    \[\tilde x=v,\]
    \item\label{it:lifting}the pushforward of $\tilde\mu$ by the projection $\pi_{T\R^n\times I}\colon T(T\R^n\times I)\to T\R^n\times I$ equals $\mu$, that is,
    \[(\pi_{T\R^n\times I})_*\tilde\mu=\mu.\]
   \end{enumerate}
  \end{enumerate}
 \end{prop}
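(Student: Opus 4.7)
The plan is to prove both directions by building explicit bridges between the ``acceleration field'' $C$ in (\ref{it:closedvelocityexists}) and the lifted measure $\tilde\mu$ in (\ref{it:closedvelocitylifting}). In one direction, $\tilde\mu$ is obtained by deterministically lifting $\mu$ through the section prescribed by $C$; in the other, $C$ is recovered from $\tilde\mu$ by conditional expectation along the canonical projection.

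For $(\ref{it:closedvelocityexists}) \Rightarrow (\ref{it:closedvelocitylifting})$, I would define the section
\[F\colon TU\times I\longrightarrow T(T\R^n\times I),\qquad F(x,v,t)=(x,v,t,\,v,\,C(x,v,t),\,1),\]
and set $\tilde\mu=F_*\mu$. Then $\tilde\mu$ is compactly supported, $\pi_{T\R^n\times I}\circ F=\mathrm{id}$ gives immediately the pushforward condition (\ref{it:lifting}), and the first tangent component $v$ encodes the support condition (\ref{it:holonomicset}). For the closedness (\ref{it:higherclosed}), I would apply $\tilde\mu=F_*\mu$ to a test $f\in C^\infty(T\R^n\times I)$ vanishing at $t=0,t_0$, using the identity $df\circ F = f_x\cdot v+f_v\cdot C+f_t$, and conclude
\[\int df\,d\tilde\mu=\int\bigl(f_x\cdot v+f_v\cdot C+f_t\bigr)\,d\mu=0\]
by the hypothesis in (\ref{it:closedvelocityexists}).

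For the converse $(\ref{it:closedvelocitylifting}) \Rightarrow (\ref{it:closedvelocityexists})$, the natural tool is the disintegration theorem applied along $\pi_{T\R^n\times I}$. Since $(\pi_{T\R^n\times I})_*\tilde\mu=\mu$ by (\ref{it:lifting}), one obtains a measurable family of probability measures $\{\tilde\mu_{(x,v,t)}\}$ on the tangent fibers, with $\tilde\mu=\int \tilde\mu_{(x,v,t)}\,d\mu(x,v,t)$, and the support condition (\ref{it:holonomicset}) forces each $\tilde\mu_{(x,v,t)}$ to be concentrated on the affine slice $\{\tilde x = v\}$. I would then define
\[C(x,v,t)=\int \tilde v\,d\tilde\mu_{(x,v,t)}(\tilde v,\tilde t),\]
which is measurable in $(x,v,t)$, and substitute into (\ref{it:higherclosed}) applied to a test $f$ vanishing at $t=0,t_0$:
\[0=\int df\,d\tilde\mu=\int\bigl(f_x\cdot v+f_v\cdot C(x,v,t)+f_t\bigr)\,d\mu,\]
giving exactly the identity in (\ref{it:closedvelocityexists}).

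The point I expect to require the most care is the reading of the $t$-tangent component $\tilde t$ in condition (\ref{it:higherclosed}). The cleanest interpretation of ``$\tilde\mu$ closed on $TU\times I$'' is the analog of Definition~\ref{def:closedmeasures} applied with base $TU\times I$, so that the convention $\tilde t=1$ (mirroring $\mathbf 1=\partial/\partial t$ in the original definition of $\mathscr H$) is built into the closedness identity; this is what makes the $f_t$ term in the converse computation appear without a $\tilde t$-factor, and is also what produces the $\partial/\partial t$ with coefficient $1$ in (\ref{it:closedvelocityexists}). If instead the full differential $f_x\tilde x+f_v\tilde v+f_t\tilde t$ is used, one first renormalizes on $\{\tilde t\neq 0\}$ by rescaling $(\tilde x,\tilde v,\tilde t)\mapsto(\tilde x/\tilde t,\tilde v/\tilde t,1)$ and checks, using the closedness of $\mu$ tested against functions of $(x,t)$ alone, that this rescaling can be carried out without destroying items (\ref{it:holonomicset})--(\ref{it:lifting}). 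The remaining ingredients---measurability of $C$ from the disintegration theorem, compactness of supports, and linearity of $df$ on fibers---are routine.
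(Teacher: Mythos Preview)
Your proposal is correct and follows essentially the same route as the paper: pushforward of $\mu$ through the section $(x,v,t)\mapsto(x,v,t,v,C(x,v,t),\mathbf 1)$ for $(\ref{it:closedvelocityexists})\Rightarrow(\ref{it:closedvelocitylifting})$, and disintegration of $\tilde\mu$ along $\pi_{T\R^n\times I}$ with $C$ defined as the fiberwise barycenter of $\tilde v$ for the converse. Your discussion of the $\tilde t$ component is, if anything, more careful than the paper's own treatment.
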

 \begin{defn}\label{def:closedderivatives}\label{def:secondderivatives}
  A closed measure $\mu\in \mathscr H$ that satisfies either (and hence both) of the conditions in the statement of Proposition \ref{prop:closedvelocities} will be called \emph{a closed measure with closed velocity}. We will refer to the function $C$ in item \ref{it:closedvelocityexists} as a \emph{second derivative} of $\mu$, a piece of terminology motivated by Example \ref{ex:verynicecurve}. We will denote by $\mathscr H_\Delta$ the subset of $\mathscr H$ consisting of all closed measures with closed velocity.
 \end{defn}
 
\begin{rmk}
 \emph{The set $\mathscr H_\Delta$ of closed measures with closed velocity is not closed in the topology of $\mathscr E'$, but it is dense in the set of closed measures $\mathscr H\subset \mathscr E'$, $\overline{\mathscr H_\Delta}=\mathscr H$.} To see that $\mathscr H$ is dense, note that the set $\mathscr H$ is the closed convex hull of the measures induced by curves as in \eqref{eq:inducedclosedmeasure}, which are all of closed velocity, as is shown in Example \ref{ex:verynicecurve}; see Remark \ref{rmk:approximability}. (A similar argument follows from the observation that the set of closed measures consisting of smooth densities on $T\R^n\times I$ is dense in $\mathscr H$; compare with Example \ref{ex:smoothisclosedvelocity}.) This, together with the existence of measures in $\mathscr H$ that are not of closed velocity, as  those of Examples \ref{ex:notsonicecurve} and \ref{ex:notclosedvelocity}, implies that the set of closed measures with closed velocity is not closed. 
\end{rmk}
\begin{rmk}
 \emph{The set $\mathscr H_\Delta$ is convex.} Indeed, if $\mu$ and $\nu$ are closed measures with closed velocities with $\tilde\mu$ and $\tilde \nu$ as in item \ref{it:closedvelocitylifting} in Proposition \ref{prop:closedvelocities}, then $(1-\lambda)\mu+\lambda\nu$, $\lambda\in(0,1)$, is also a closed measure with closed velocity, as the measure $(1-\lambda)\tilde\mu+\lambda\tilde \nu$ again works as in item \ref{it:closedvelocitylifting}.
\end{rmk}
 \begin{proof}[Proof of Proposition \ref{prop:closedvelocities}]
  If item \ref{it:closedvelocityexists} holds, then let $y\colon TU\times I\to T(T\R^n\times I)$ be given by 
  \[y(x,v,t)=v\frac{\partial}{\partial x}+C(x,v,t)\frac{\partial}{\partial v}+\frac{\partial}{\partial t}=(x,v,t,v,C(x,v,t),\mathbf 1).\]
  Letting $\tilde \mu=y_*\mu$, we get a measure $\tilde\mu$ as in item \ref{it:closedvelocitylifting}.
  
  Now assume instead that item \ref{it:closedvelocitylifting} holds. Disintegrate $\tilde\mu$ along the fibres of the projection $\pi_{T\R^n\times I}\colon T(T\R^n\times I)\to T\R^n\times I$, like so:
  \[\tilde\mu=\int_{T\R^n\times I}\nu_pd\eta(p)\]
  for some measure $\eta$ on $T\R^n\times I$ and some collection $\{\nu_p\}_{p\in T\R^n\times I}$ of measures $\nu_p$ on the tangent spaces $\pi^{-1}_{T\R^n\times I}(p)\cong T_{p}(T\R^n\times I)\cong \R^{2n+1}$; this in turn means that for measurable $f\colon T(T\R^n\times I)\to\R$ we have
  \[\int_{T(T\R^n\times I)}f\,d\tilde\mu=\int_{T\R^n\times I}\left[\int_{\pi_{T\R^n\times I}^{-1}(p)}f\,d\nu_p\right]d\eta(p).\] 
  Let, for $p=(x,v,t)\in T\R^n\times I$, 
  \[C(p)=\int_{\pi^{-1}_{T\R^n\times I}(p)} \tilde v\,d\nu_p(\tilde x, \tilde t,\tilde v)\]
  be the center of mass of $\nu_p$ on the corresponding fiber.
  It follows from item \ref{it:holonomicset} that $\int\tilde x\,d\nu_p=v$, from item \ref{it:lifting} that $\int \tilde t\,d\nu_p=0$, and from item \ref{it:higherclosed} that $C$ satisfies the condition from item \ref{it:closedvelocityexists}.
 \end{proof}
 
\begin{example}\label{ex:verynicecurve}
 Let $\gamma\colon [0,t_0]=I\to\R^n$ be a $C^{1,1}$ curve; in other words, $\gamma$ is differentiable and its derivative $\gamma'$ is Lipschitz. Then if we let $\mu$ be the measure on $T\R^n\times I$ defined by 
 \begin{equation}\label{eq:inducedclosedmeasure}
  \int f\,d\mu=\int_0^{t_0}f(\gamma(t),\gamma'(t),t)\,dt,\qquad f\in C^\infty(T\R^n\times I),
 \end{equation}
 the measure $\mu$ is closed with closed velocity, i.e., $\mu\in\mathscr H_\Delta$. The function $C$ and the measure $\tilde\mu$ in Proposition \ref{prop:closedvelocities} are in this case given by
 \begin{equation}\label{eq:inducedtildemeasure}
  C(\gamma(t),\gamma'(t),t)=\gamma''(t),\qquad t\in [0,t_0],
 \end{equation}
 and $C\equiv0$ elsewhere; and
 \[\int_{T(T\R^n\times I)}f\,d\tilde\mu=\int_0^{t_0} f(\gamma(t),\gamma'(t),t,\gamma'(t),\gamma''(t),\mathbf 1)\,dt,\]
 where $f\in C^\infty(T(T\R^n\times I))$, $\gamma''(t)\in T_{\gamma'(t)}(T_\gamma(t)\R^n)\cong\R^n$, $\mathbf 1\in T_tI$. Recall that the second  derivative $\gamma''$ is defined for almost every $t$ because $\gamma'$ is Lipschitz, so we can apply Rademacher's theorem.

\end{example}
\begin{example}\label{ex:notsonicecurve}
 If now we take a curve $\gamma\colon[0,t_0]\to\R^n$ whose derivative is not continuous at one point, then the construction in Example \ref{ex:verynicecurve} produces a closed measure that is not of closed velocity. To see why, note that in this case the discontinuity in the image of the curve $(\gamma,\gamma')$ in $T\R\times I$ means that item \ref{it:closedvelocitylifting} cannot be satisfied.
\end{example}
\begin{example}\label{ex:notclosedvelocity}
 The measure $\mu$ in Example \ref{ex:noninvariantminimum} can be seen not to be of closed velocity because no lifting $\tilde\mu$ of $\mu$ will satisfy item \ref{it:holonomicset} in Proposition \ref{prop:closedvelocities}.
\end{example}
\begin{example}\label{ex:invariantisclosedvelocity}
 \emph{Every closed measure $\mu\in\mathscr H$ invariant under the Euler-Lagrange flow is of closed velocity and the vector field $(L_x-L_{vt}-vL_{xv})L^{-1}_{vv}$  is a second derivative for $\mu$.} 
This follows immediately from Definitions \ref{def:ELflow}, \ref{def:invariant}, and \ref{def:closedderivatives}.

\end{example}
\begin{example}\label{ex:smoothisclosedvelocity}
 \emph{A closed measure $\mu\in \mathscr H$ that consists of a smooth density $\mu=\rho \, d\mathscr L_{\R^{2n+1}}$ on $T\R^n\times I$ is of closed velocity.} Here $d\mathscr L_{\R^{2n+1}}$ denotes Lebesgue measure. Indeed, in this case there exists a smooth vector field on $T\R^n\times I$ of the form
 \[
 X(x,v,t)=\frac{\partial}{\partial t}+v\frac{\partial}{\partial x}+g(x,v,t)\frac{\partial}{\partial v},
 \]
 whose flow within $T\R^n\times (0,t_0)$ preserves $\mu$, which means that the equation satisfied by $X$ is $\divergence(\mu X)=0$. Then $\mu$ can be decomposed as a convex combination of measures induced as in \eqref{eq:inducedclosedmeasure} by orbits of the vector field $X$, and $g$ is a second derivative for $\mu$. 
\end{example}

\subsection{The class of exact horizontal variations} 
\label{sec:horizontalvariations}

We want to define the class of distributions $\Theta_\Delta(L)$ whose criticality characterizes Euler-Lagrange invariance. This first requires us to look at a larger class of horizontal variations.

 \subsubsection{Velocity-dependent horizontal variations}
 \label{sec:velocitydependenthv}

We generalize Example \ref{ex:horizontal}.
Let us consider a smooth vector field $F\colon T\R^n\times I\to\R^n$ vanishing on the boundary $T\R^n\times \{0,t_0\}$. We %
take a smoothly-varying collection $\{\gamma_s\}_{s\in(-\varepsilon,\varepsilon)}$ of smooth curves $\gamma_s\colon I\to\R^n\times I$ such that we have
\[\frac{d\gamma_s(t)}{ds}=F(\gamma_s(t),\gamma'_s(t),t).\]
Then also
\[\frac{d\gamma_s'(t)}{ds}=\frac{d}{dt}F(\gamma_s(t),\gamma'_s(t),t)=F_x\gamma'_s(t)+F_v\gamma''_s(t)+F_t.\]
Thus the family of curves $\{t\mapsto(\gamma_s(t),\gamma'_s(t))\}_s$ on $T\R^n$ is flowing along the vector field $(F,F_xv+F_vC+F_t)$ with $C=\gamma_s''(t)$.

In order to appropriately generalize this, let us take a closed measure $\mu\in\mathscr H_\Delta$ on $T\R^n\times I$ with closed velocity, and let us see how we can make it flow along the flow induced by $F$. Since $\mu$ has closed velocity, a second derivative $C\colon T\R^n\times I\to\R^n$ exists for $\mu$; see Definition \ref{def:closedderivatives}. 
We let $\eta_{F,\mu,C}\in \mathscr E'$ be the compactly-supported distribution on $T\R^n\times I$ given, 
for $f\in C^\infty(T\R^n\times I)$, by
\begin{equation}\label{eq:etaFmuC}
 \langle \eta_{F,\mu,C},f\rangle=\int_{T\R^n\times I} \left[ F\cdot \frac{\partial f}{\partial x}+(F_xv+F_vC+F_t)\cdot\frac{\partial f}{\partial v}%
 \right]d\mu.
\end{equation}

The distribution $\eta_{F,\mu,C}$ corresponds to flowing $\mu$ in the direction induced by $F$. We will need to assume that the flow of $F$ preserves the closedness of $\mu$, which in terms of $\eta$ means that, for all $\varphi\in C^\infty(\R^n\times I)$,
\begin{equation}\label{eq:etaclosed}
 \langle\eta_{F,\mu,C},d\varphi\rangle=0.
\end{equation}

Let us now use Corollary \ref{coro:vectorfieldvariations} to show that a family $(\mu_s)_{s\in\R}\subset\mathscr H$ exists with derivative $d\mu_s/ds|_{s=0}=\eta_{F,\mu,C}$. We need to show that for all $\phi\in C^\infty(\R^n\times I)$ we have $\langle\eta_{F,\mu,C},d\phi\rangle=0$. Indeed, 
\begin{align*}
 \langle\eta_{F,\mu,C},d\phi\rangle&=\int F\frac{\partial d\phi%
 }{\partial x}+d\phi_{(x,t)}(F_xv+F_vC+F_t,0)%
 \,d\mu\\
 &=\int \left[F\phi_{xx}v+F\phi_{xt}\mathbf 1+\phi_xF_xv+\phi_xF_vC+\phi_xF_t+\phi_t\cdot 0\right]d\mu\\
 &=\int Y(d\phi(F,0))\,d\mu
\end{align*}
where 
\[Y=Y(x,v,t)=v\frac{\partial}{\partial x}+C(x,v,t)\frac{\partial}{\partial v}+\frac{\partial}{\partial t}.\]
The last integral vanishes because $C$ is a second derivative (see item \ref{it:closedvelocityexists} of Proposition \ref{prop:closedvelocities}).

\begin{defn}
 Let $\mu\in\mathscr H_\Delta$ be a closed measure with closed velocity.
 A variation $(\mu_t)_{t\in\R}$ of $\mu=\mu_0$ is a \emph{velocity-dependent horizontal variation} if there exist a vector field $F\colon T\R^n\times I\to\R^n$ and a second derivative $C$ of $\mu$ such that $d\mu_t/dt|_{t=0}=\eta_{F,\mu,C}$, where $\eta_{F,\mu,C}$ is the distribution defined in \eqref{eq:etaFmuC}. Let $\Theta_\Delta$ be the set of distributions $\eta_{F,\mu,C}$ that are derivatives of velocity dependent horizontal variations.
\end{defn}

\subsubsection{Horizontal criticality}
\label{sec:horizontalcriticality}

Let $\pi\colon T\R^n\times I\to \R^n\times I$ be the projection, $\pi_{\R^n\times I}(x,v,t)=(x,t)$.
For the Lagrangian function $L\in C^2(T\R^n\times I)$, we define a special set of velocity dependent horizontal variations: let $\Theta_\Delta(L)$ be the set of distributions of the form $\eta_{F,\mu,C}$, defined as in \eqref{eq:etaFmuC}, corresponding to closed measures $\mu\in\mathscr H_\Delta$ with closed velocity, second derivatives $C$, and vector fields $F\in C^1(T\R^n\times I)$ with the special property that the restriction of $L_{vv}F$ to each fiber $\pi_{\R^n\times I}^{-1}(x,t)\cong\R^n$ is a closed form, that is,
\[d_v(L_{vv}F)=0.\]

\begin{defn}\label{def:horizontalcriticality}
 A variation $(\mu_s)_{s\in\R}$ with derivative $d\mu_s/ds|_{s=0}$ contained in $\Theta_\Delta(L)$ is an \emph{exact horizontal variation}.
 
 Given a Lagrangian $L\in C^2(T\R^n\times I)$, a closed measure $\mu\in \mathscr H_\Delta$ with closed velocities is \emph{horizontally critical} if it is $\Theta_\Delta(L)$-critical (see Definition \ref{def:critical}).
\end{defn}

\begin{rmk}
 The set of exact horizontal variations contains all velocity-in\-de\-pen\-dent horizontal variations, defined in Example \ref{ex:horizontal}. For, the corresponding vector field $F=p_*DP$ is linear in the velocity $v$ on each fibre of the tangent bundle, so $L_{vv}F$ is also linear in $v$, and it corresponds to the derivative of a cuadratic function of $v$.
\end{rmk}

\subsection{Characterization}
\label{sec:invariancecharacterization}

\begin{thm}\label{thm:invariance}
 Let $\mu$ be a compactly-supported, positive, Radon measure on $T\R^n\times I$, and let $L\in C^2(T\R^n\times I)$. The following are equivalent:
 \begin{enumerate}
  \item \label{it:invariant} The measure $\mu$ is invariant under the Euler-Lagrange flow.
  \item\label{it:horizontallycritical} The measure $\mu$ is closed with closed velocities, and it is horizontally critical.
 \end{enumerate}
\end{thm}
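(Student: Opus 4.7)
The plan is to reduce both implications to a single algebraic identity that computes $\langle\eta_{F,\mu,C},L\rangle$ explicitly. Fix $\mu\in\mathscr H_\Delta$ with second derivative $C$, and let $F\in C^1(T\R^n\times I)$ vanish on $T\R^n\times\{0,t_0\}$ with $d_v(L_{vv}F)=0$. With $Y=v\partial_x+C\partial_v+\partial_t$, the product rule together with the symmetries $L_{v_kv_\ell}=L_{v_\ell v_k}$ and $L_{v_kx_\ell}=L_{x_\ell v_k}$ yields
\[
(F_t+F_xv+F_vC)\cdot L_v=Y(F\cdot L_v)-F\cdot(L_{vt}+vL_{xv})-L_{vv}F\cdot C.
\]
Integrating against $\mu$, the $Y$-term vanishes by closed velocity applied to $F\cdot L_v$ (which vanishes at $t=0,t_0$), so
\[
\langle\eta_{F,\mu,C},L\rangle=\int\bigl[F\cdot(L_x-L_{vt}-vL_{xv})-L_{vv}F\cdot C\bigr]\,d\mu.
\]
Next write $L_{vv}F=\phi_v$ by the Poincar\'e lemma on the simply connected fibers $\R^n$, with $\phi$ inheriting vanishing on the $t$-boundary from $F$. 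Using symmetry of $L_{vv}$, $F\cdot(L_x-L_{vt}-vL_{xv})=\phi_v\cdot C_{EL}$, where I abbreviate $C_{EL}:=(L_x-L_{vt}-vL_{xv})L_{vv}^{-1}$. The integrand becomes $\phi_v\cdot(C_{EL}-C)=\mathcal X\phi-Y\phi$, and a final use of closed velocity gives the master identity
\[
\langle\eta_{F,\mu,C},L\rangle=\int\mathcal X\phi\,d\mu\quad\text{whenever}\quad L_{vv}F=\phi_v.
\]

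For $(1)\Rightarrow(2)$, Example \ref{ex:invariantisclosedvelocity} already states that an invariant measure lies in $\mathscr H_\Delta$ with $C_{EL}$ as a second derivative. For any exact horizontal variation $\eta_{F,\mu,C}$ the master identity combined with invariance ($\int\mathcal X\phi\,d\mu=0$) immediately yields $\langle\eta_{F,\mu,C},L\rangle=0$, i.e., horizontal criticality.

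For $(2)\Rightarrow(1)$, let $\varphi\in C_c^\infty(TU\times(0,t_0))$. After multiplying by a cutoff so that $\supp\varphi$ lies in the open set on which $L_{vv}$ is invertible (harmless because $\supp\mu$ is contained there), the field $F:=L_{vv}^{-1}\varphi_v$ is well defined, vanishes at $t=0,t_0$, and trivially satisfies $d_v(L_{vv}F)=d_v\varphi_v=0$. Hence $\eta_{F,\mu,C}\in\Theta_\Delta(L)$ for any second derivative $C$ of $\mu$, which exists because $\mu\in\mathscr H_\Delta$; horizontal criticality makes the left side of the master identity vanish, so $\int\mathcal X\varphi\,d\mu=0$, proving invariance.

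The main obstacle is a regularity mismatch: $\Theta_\Delta(L)$ as defined requires $F\in C^1$, whereas $L_{vv}^{-1}\varphi_v$ is generally only locally Lipschitz under the paper's standing hypothesis that $L\in C^2$ with locally Lipschitz second derivatives. I would handle this either by approximating admissible $F$ by $C^1$ fields and verifying that both sides of the master identity are continuous in $F$ in the appropriate seminorm, or by widening $\Theta_\Delta(L)$ to include locally Lipschitz admissible fields --- the paper's regularity conventions make this a cosmetic change. The freedom in choosing $C$ is harmless, since $\int(C-C')\phi_v\,d\mu=0$ for any two second derivatives of the same $\mu\in\mathscr H_\Delta$. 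Finally, the Poincar\'e lemma step can be realized concretely via $\phi(x,v,t)=\int_0^1 (L_{vv}F)(x,sv,t)\cdot v\,ds$, which vanishes on the $t$-boundary as required.
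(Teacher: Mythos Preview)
Your argument is essentially the paper's own: both derive the master identity $\langle\eta_{F,\mu,C},L\rangle=\int\mathcal X\phi\,d\mu$ (for $L_{vv}F=\phi_v$) by two applications of the closed-velocity condition---once to $L_vF$ and once to $\phi$---and then read off both implications from it; the only difference is that you run the computation from $\eta$ toward $\mathcal X$ while the paper runs it the other way. One minor omission: in $(1)\Rightarrow(2)$ you cite Example~\ref{ex:invariantisclosedvelocity}, but that example already assumes $\mu\in\mathscr H$, whereas the theorem starts from a bare Radon measure; the paper first verifies closedness by applying invariance to test functions depending only on $(x,t)$, and you should insert that step. Your observation about the $C^1$/Lipschitz regularity mismatch for $F=L_{vv}^{-1}\varphi_v$ is well taken and applies equally to the paper's own argument.
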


\begin{proof}[Proof of Theorem \ref{thm:invariance}]
 First, we need a preliminary computation. Let $f$ be a function in $C^\infty_c(T\R^n\times I)$. Assume for now that $C$ is a second derivative for $\mu$, $X=(L_x-L_{vt}-vL_{xv})L_{vv}^{-1}$, $\mathcal X$ the Euler-Lagrange vector field as in Definition \ref{def:ELflow}, that is,
 \begin{align*}
  \mathcal X%
  &=\frac{\partial}{\partial t}+v\frac{\partial}{\partial x}+X\frac{\partial}{\partial v},
 \end{align*}
 and let
 \[F=L_{vv}^{-1}\frac{\partial f}{\partial v},\]
 so that
 \[d_v(L_{vv}F)=d_v(d_vf)=0.\]
 Note that since $C$ is a second derivative of $\mu$, we have
 \[\int \left(v\frac{\partial}{\partial x}+C\frac{\partial}{\partial v}+\frac{\partial}{\partial t}\right)f\,d\mu=0.\]
 Now,
 \begin{align*}
  \int \mathcal Xf\,d\mu &=\int \left(v\frac{\partial}{\partial x}+X\frac{\partial}{\partial v}+\frac{\partial}{\partial t}\right)f\,d\mu\\
  &=\int \left(v\frac{\partial}{\partial x}+( X+C-C)\frac{\partial}{\partial v}+\frac{\partial}{\partial t}\right)f\,d\mu\\
  &=\int ( X-C)\frac{\partial f}{\partial v}\,d\mu%
  \\
  &=\int ( XL_{vv}-CL_{vv})L_{vv}^{-1}\frac{\partial f}{\partial v}\,d\mu\\
  &=\int (L_x-L_{vt}-vL_{xv}-CL_{vv})F\,d\mu\\
  &=\int L_xF-(\left(v\frac{\partial}{\partial x}+C\frac{\partial}{\partial v}+\frac{\partial}{\partial t}\right)L_v)F\,d\mu\\
  &=\int L_xF+L_v\left(v\frac{\partial}{\partial x}+C\frac{\partial}{\partial v}+\frac{\partial}{\partial t}\right)F\,d\mu\\&=\langle\eta_{F,\mu,C},L\rangle
 \end{align*}
since 
\[\int \left(v\frac{\partial}{\partial x}+C\frac{\partial}{\partial v}+\frac{\partial}{\partial t}\right)(L_vF)\,d\mu=0
\]
because $L_vF\in C^1(T\R^n\times I)$ and $C$ is a second derivative of $\mu$.

\eqref{it:horizontallycritical}$\Rightarrow$\eqref{it:invariant}. 
If $\mu$ is closed with closed velocities and horizontally critical, and $C$ is some second derivative for it, we get from our computation above that, for all $f\in C^\infty_c (T\R^n\times I)$, if we let $F=L_{vv}^{-1}\partial f/\partial v$, then
\[\int \mathcal Xf\,d\mu=\langle\eta_{F,\mu,C},L\rangle=0,\]
so $\mu$ is invariant under the Euler-Lagrange flow.

\eqref{it:invariant}$\Rightarrow$\eqref{it:horizontallycritical}. 
If $\mu$ is invariant and if $\psi\in C^\infty(\R^n\times I)$ vanishes on $\R^n\times\{0,t_0\}$, we can let $\phi(x,v,t)=\psi(x,t)$, and we then have
\[\int d\psi\,d\mu=\int v\frac{\partial \psi}{\partial x}+\frac{\partial\psi}{\partial t}\,d\mu=\int v\frac{\partial \phi}{\partial x}+X\frac{\partial\phi}{\partial v}+\frac{\partial\phi}{\partial t}\,d\mu =\int \mathcal X\phi\, d\mu=0,\]
so $\mu$ is closed. As explained in Example \ref{ex:invariantisclosedvelocity}, invariance then implies that $\mu$ is of closed velocity and that $X$ is a second derivative. Now assume that $C$ is a second derivative for $\mu$, and that $F\colon T\R^n\times I\to\R^n$ is a smooth function such that the 1-form $FL_{vv}$ is closed when restricted to the fibers of $T\R^n\times I$, that is, $d_v(FL_{vv})=0$. Because of this last property, by Poincar\'e's lemma there is a function $f\in C^\infty(T\R^n\times I)$ such that $\partial f/\partial v=L_{vv}F$. From the computation above, we get that 
\[\langle\eta_{F,\mu,C},L\rangle=\int \mathcal Xf\,d\mu=0.\]
So $\mu$ is horizontally critical.
\end{proof}

\subsection{Insufficiency of the class of velocity-independent horizontal variations}
\label{sec:insufficiency}

The question arises of whether a smaller class of variations would still characterize the invariance under the Euler-Lagrange flow than the one suggested in Section \ref{sec:horizontalcriticality}. This is an open problem. 

A natural class to ask about is the set of velocity-independent horizontal variations, defined in Example \ref{ex:horizontal}. These variations were the ones the Euler-Lagrange equations were originally deduced from. This class turns out to be insufficient for a characterization of invariance under the Euler-Lagrange flow, as the following example shows.

\begin{example}
 This example is inspired in an idea of Alberto Abbondandolo.
 For simplicity we will work in the time-independent setting, although the example can be easily extended to the time-dependent one. 
 
 Let $\T=\R/\Z$ be the 1-dimensional torus with tangent bundle. Let $\Theta_\T$ be the set of distributions corresponding to velocity-independent horizontal variations; these can be described explicitly: given a $C^1$ function $\beta\colon\T\to\R$ and a measure $\mu\in\mathscr H_\Delta$, the distribution $\eta_{\beta,\mu}$ acts on $\phi\in C^\infty(T\T)$ like this:
\[\langle\eta_{\beta,\mu},\phi\rangle=\int_0^1 \beta(x)\frac{\partial \phi}{\partial x}(x,v)+\beta'(x)\frac{\partial \phi}{\partial v}(x,v)\,d\mu.\]

Let us construct an example of a measure $\mu\in \mathscr H_\Delta$ that is $\Theta_\T$-critical but not invariant under the Euler-Lagrange flow for the Lagrangian $L(x,v)=v^2$. Let $k\geq 4$ and, for $i=1,2,\dots,k$, let $\rho_i\colon\R\to\R_{\geq0}$ be nonnegative, $C^\infty$ functions supported on $\R_{\geq 0}$, such that the interior of $\supp \rho_i\cap\supp\rho_j$ is nonempty for all $1\leq i,j\leq k$, $\int_\R\rho_i(v)dv=1$, and all the numbers 
\[r_i=\int_\R v\rho_i(v)\,dv\quad\textrm{and}\quad s_i=\int_\R v^2\rho_i(v)\,dv\]
are positive and distinct. 
Let
\[\mu=\sum_{i=1}^k a_i(x)\,\rho_i(v)\,dv\,dx\]
for some functions $a_i\colon\T\to\R_{>0}$ whose properties we will now determine. Here, $dx$ and $dv$ stand for Lebesgue measure on $\T$ and $T_x\T\cong \R$, respectively. Note that $\mu$ is indeed in $\mathscr H_\Delta$ by Example \ref{ex:smoothisclosedvelocity}.
It can be checked that
\begin{enumerate}
 \item\label{it:aaprobability} the measure $\mu$ is a probability if, and only if, $\sum_i\int_0^1 a_i(x)\,dx=1$,
 \item\label{it:aaclosed} the measure $\mu$ is closed if, and only if, the function $x\mapsto\sum_ir_ia_i(x)$ is constant,
 \item\label{it:aacritical} the measure $\mu$ is $\Theta_\T$-critical if, and only if, the function $x\mapsto \sum_is_ia_i(x)$ is constant, and
 \item\label{it:aainvariant} the measure $\mu$ is invariant under the Euler-Lagrange flow for $L$ if, and only if, the functions $a_i$ are constant.
\end{enumerate}
It can be seen that, since $k\geq 4$, there exist smooth functions $a_i$ that satisfy items \ref{it:aaprobability}--\ref{it:aacritical} while not satisfying item \ref{it:aainvariant}.
\end{example}

\subsection{Ma\~n\'e's theorem}
\label{sec:manethm}

Let $L\colon T\R^n\times I\to\R$ be a $C^2$ function, and assume that 
there is a constant $c\in\R$ such that $L+c$ is a $\mu$-minimizable Lagrangian, so that there is a Lipschitz function $f$ such that $L+c\geq df$ with equality throughout $\supp\mu$, with the additional property that this equality occurs on exactly one point on each fiber over the projected support $\pi_{\R^n\times I}(\supp\mu)$.

\begin{coro}
 In the setting described above, the minimizer $\mu$ must be invariant under the Euler-Lagrange flow.
\end{coro}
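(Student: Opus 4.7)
The plan is to verify both conditions of Theorem \ref{thm:invariance}: that $\mu$ is a closed measure with closed velocities, and that $\mu$ is horizontally critical. Invariance then follows immediately. Note that the $\mu$-minimizability of $L+c$ already gives $\mu\in\mathscr H$ and $\int df\,d\mu=0$ by Definition \ref{def:minimizableL}.

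For closed velocities, the equality $L(x,v,t)+c=f_x(x,t)\cdot v+f_t(x,t)$ on $\supp\mu$ together with the one-point-per-fiber assumption produces a section $\varsigma(x,t)=(x,v(x,t),t)$ whose graph contains $\supp\mu$. Combining Lemma \ref{lem:thatextraregularity} with the $C^2$ regularity of $L$ and the implicit function theorem (using the natural invertibility of $L_{vv}$ on $\supp\mu$) gives $v$ locally Lipschitz, hence differentiable $\bar\mu$-almost everywhere, where $\bar\mu$ denotes the projection of $\mu$ via $\pi_{\R^n\times I}$, so that $\mu=\varsigma_*\bar\mu$. Defining
\[C(x,v(x,t),t) = v(x,t)\cdot\partial_x v(x,t)+\partial_t v(x,t)\]
(and extending $C$ arbitrarily off the graph), the chain rule yields, for $\phi\in C^\infty(T\R^n\times I)$ vanishing at $t\in\{0,t_0\}$,
\[\int(v\phi_x+C\phi_v+\phi_t)\,d\mu = \int\bigl((\phi\circ\varsigma)_x\cdot v+(\phi\circ\varsigma)_t\bigr)\,d\mu = 0,\]
the last equality following from the closedness of $\mu$ (after smoothly approximating the Lipschitz function $\phi\circ\varsigma$, which also vanishes at $t\in\{0,t_0\}$). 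This verifies item \ref{it:closedvelocityexists} of Proposition \ref{prop:closedvelocities}, so $\mu\in\mathscr H_\Delta$ with second derivative $C$.

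For horizontal criticality, exact horizontal variations $(\mu_s)_{s\in\R}$ are two-sided, so the reparametrization $(\mu_{-s})_{s\geq 0}$ is itself a one-sided variation of $\mu$ with derivative $-\eta_{F,\mu,C}$. Theorem \ref{thm:generalvariationscharacterization} applied to both one-sided variations forces $\langle\eta_{F,\mu,C},\varphi\rangle=0$ for every $\mu$-minimizable Lagrangian $\varphi$, and in particular for our hypothesized $\varphi=L+c$. Since $\langle\eta_{F,\mu,C},1\rangle=0$ (all partial derivatives of a constant vanish in \eqref{eq:etaFmuC}),
\[\langle\eta_{F,\mu,C},L\rangle=\langle\eta_{F,\mu,C},L+c\rangle-c\langle\eta_{F,\mu,C},1\rangle=0,\]
so $\mu$ is $\Theta_\Delta(L)$-critical. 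Theorem \ref{thm:invariance} now yields invariance under the Euler-Lagrange flow.

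The main technical obstacle is the first step: upgrading the pointwise uniqueness hypothesis to Lipschitz regularity of the section $v(x,t)$. This requires quantitative control (such as uniform invertibility of $L_{vv}$ on $\supp\mu$, which is also the natural setting for the Euler-Lagrange vector field to be defined) combined with the Lipschitz regularity of $df$ supplied by Theorem \ref{thm:mathertimedependent}; once this regularity is in hand, the remainder is a chain-rule computation, a standard approximation argument for extending closedness to Lipschitz test functions, and a direct application of the variational machinery developed earlier in the paper.
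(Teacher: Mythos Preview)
Your proof takes essentially the same approach as the paper's: verify that $\mu$ has closed velocities and is horizontally critical, then invoke Theorem \ref{thm:invariance}. The paper's argument is even terser---it obtains the graph property from item \ref{it:fisdifferentiable} of Theorem \ref{thm:mathertimedependent} and Lemma \ref{lem:thatextraregularity}, asserts that the derivative of the section serves as a second derivative, and leaves horizontal criticality implicit (immediate from minimality plus two-sidedness of horizontal variations, exactly as you spell out).

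One small point worth tightening: the implication ``locally Lipschitz, hence differentiable $\bar\mu$-almost everywhere'' tacitly invokes Rademacher, which only yields Lebesgue-a.e.\ differentiability; since $\bar\mu$ need not be absolutely continuous with respect to Lebesgue measure, the $\bar\mu$-a.e.\ differentiability of the section should come directly from Lemma \ref{lem:thatextraregularity} (which you do cite, but for the Lipschitz bound---that bound is rather item \ref{it:lipschitzity} of Theorem \ref{thm:mathertimedependent}). The same caution applies to the approximation of $\phi\circ\varsigma$: mollification gives convergence of derivatives only Lebesgue-a.e., so the cleanest route is the duality with the boundary measure used in the proof of Lemma \ref{lem:thatextraregularity}.
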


\begin{rmk}
A version of this corollary was proved by Ma\~n\'e \cite[\S2-4]{contrerasiturriagabook}, assuming that the Lagrangian $L$ is convex and superlinear (also known as \emph{Tonelli}, which immediately implies the existence of minimizers, whence by Theorem \ref{thm:mathertimedependent} that setting is contained in the one described above). The weak \textsc{kam} theory \cite{fathibook,fathisiconolfi04,fathisiconolfi05} gives alternative tools that also prove Ma\~n\'e's result.
\end{rmk}

\begin{proof}
 It follows from item \ref{it:fisdifferentiable} in Theorem \ref{thm:mathertimedependent} and Lemma \ref{lem:thatextraregularity} that every minimizer $\mu\in\mathscr H$ of the action $A_L(\nu)=\int L\,d\nu$ has the property that its support is contained in the graph of a Lipschitz section $\sigma=df$ of the bundle $T\R^n\times I\to\R^n\times I$ that is differentiable on $\pi_{\R^n\times I}(\supp\mu)$. As a consequence, $\mu$ is of closed velocity, $\mu\in\mathscr H_\Delta$, because the derivative of $\sigma$ can be used as a second derivative for $\mu$; cf. Example \ref{ex:smoothisclosedvelocity}. 
 The statement of the corollary follows immediately from Theorem \ref{thm:invariance}.
\end{proof}

\appendix

\section{Previous results}
\label{sec:previous}
\subsection{Differentiable curves in a convex set}
\label{sec:differentiablecurves}

Recall that the \emph{tangent cone} of a convex set $C$ at a point $p$ is the set $\R_{\geq0}(C-p)$.

\begin{thm}[{\cite[Corollary 3]{mydifferentiablecurves}}]
 \label{thm:differentiablecurves}
 Let $V$ be a locally-convex topological vector space, $p$ an element of a convex subset $C$ of $V$, and $v$ an element of $V$. Assume that $v$ and $C$ are contained in a subset $X$ of $V$ such that there is a norm on $X$ that induces the topology that $X$ inherits from $V$. Assume additionally that $X$ contains the intersection of the tangent cone of $C$ at $p$ with an open subset of $V$ that contains $v$. Then there is a curve $c\colon[0,+\infty)\to C$ such that 
 \[c(0)=p\quad\textrm{and}\quad dc(t)/dt|_{t=0+}=v\]
 if, and only if, $\theta(v)\geq 0$ for all $\theta$ in $V^*$ for which $\theta(p)=\inf_{q\in C}\theta(q)$, that is, for all continuous linear functionals $\theta$ that attain their minimum within $C$ at $p$.
\end{thm}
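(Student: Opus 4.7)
The strategy is to apply Theorem \ref{thm:invariance}, which reduces invariance under the Euler-Lagrange flow to two conditions: $\mu$ is a closed measure with closed velocity, and $\mu$ is horizontally critical. Closedness of $\mu$ is part of the hypothesis $\mu\in\mathscr H$, so I would establish closed velocity first and then horizontal criticality.

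For the closed-velocity condition, I would exploit the hypothesis that equality $L+c=df$ holds at exactly one $v$ per fibre over the projected support $\pi_{\R^n\times I}(\supp\mu)$. This identifies $\supp\mu$ with the graph of a well-defined section $\sigma\colon\pi_{\R^n\times I}(\supp\mu)\to T\R^n\times I$, where $\sigma(x,t)$ is the unique point of tangency between $L+c$ and the affine form $df_{(x,t)}$ on the fibre. By the differentiability output of Theorem \ref{thm:mathertimedependent} (item \ref{it:fisdifferentiable}) together with Lemma \ref{lem:thatextraregularity}, $\sigma$ extends to a locally Lipschitz section in a neighbourhood of the projected support. Pushing $\mu$ forward by the lift $(x,v,t)\mapsto(x,v,t,v,D\sigma(x,t)\cdot(v,\mathbf 1),\mathbf 1)$, in the style of Examples \ref{ex:verynicecurve} and \ref{ex:smoothisclosedvelocity}, produces a measure $\tilde\mu$ satisfying item \ref{it:closedvelocitylifting} of Proposition \ref{prop:closedvelocities}, whence $\mu\in\mathscr H_\Delta$.

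For horizontal criticality, I would use the minimizing character of $\mu$. Since $L+c$ is $\mu$-minimizable, Theorem \ref{thm:mathertimedependent} gives $\int(L+c)\,d\nu\geq\int(L+c)\,d\mu$ for every $\nu\in\mathscr H$, so $\mu$ is an absolute minimizer of $A_{L+c}$. For any variation $(\mu_s)_{s\geq 0}\subset\mathscr H$ this yields $\tfrac{d}{ds}A_{L+c}(\mu_s)|_{s=0^+}\geq 0$. For an exact horizontal variation with derivative $\eta_{F,\mu,C}$, pairing against the constant $1$ gives $\langle\eta_{F,\mu,C},1\rangle=0$ (both $\partial/\partial x$ and $\partial/\partial v$ annihilate constants), so total mass is preserved infinitesimally and the inequality descends to $\tfrac{d}{ds}A_L(\mu_s)|_{s=0^+}\geq 0$. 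The decisive observation is that the class $\Theta_\Delta(L)$ is closed under negation: replacing $F$ by $-F$ preserves the closedness condition $d_v(L_{vv}F)=0$ and sends $\eta_{F,\mu,C}$ to $-\eta_{F,\mu,C}$. Applying the inequality to both signs forces the derivative to vanish, so $\mu$ is $\Theta_\Delta(L)$-critical. Theorem \ref{thm:invariance} then yields invariance.

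The main obstacle, in my view, lies in the first step: cleanly extracting a second derivative in the sense of Definition \ref{def:secondderivatives} from the Lipschitz section $\sigma=df$. The uniqueness-on-fibres hypothesis is precisely what ensures $\supp\mu$ lies in a graph rather than in a more complicated set, but verifying that the $(v,\mathbf 1)$-directional derivative of $\sigma$ exists $\mu$-almost everywhere and yields a function $C$ satisfying the integral identity in item \ref{it:closedvelocityexists} of Proposition \ref{prop:closedvelocities} requires carefully combining Lemma \ref{lem:thatextraregularity} with the closedness equation for $\mu$. Once that bookkeeping is done, the rest of the argument is a short application of the machinery already assembled in the paper.
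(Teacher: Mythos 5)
Your proposal does not address the statement at hand. The statement you were asked to prove is Theorem \ref{thm:differentiablecurves}: an abstract functional-analytic criterion, in a locally convex topological vector space $V$, for when a vector $v$ can be realized as the one-sided derivative at $t=0$ of a curve $c\colon[0,+\infty)\to C$ lying in a convex set $C$ with $c(0)=p$ --- namely, that $\theta(v)\geq 0$ for every continuous linear functional $\theta$ attaining its minimum over $C$ at $p$. A proof of this would have to engage with the tangent cone $\R_{\geq0}(C-p)$, the normability hypothesis on $X$ (which is what lets one actually build the curve, e.g.\ by a telescoping/concatenation argument along points of $C$ approximating $p+tv$), and a separation (Hahn--Banach) argument for the ``only if'' direction. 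None of this appears in your write-up. (For what it is worth, the paper itself does not prove this theorem either; it imports it verbatim from \cite[Corollary 3]{mydifferentiablecurves}, so it is a black box here.)

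What you have written instead is a proof sketch of the unnamed corollary in Section \ref{sec:manethm} (Ma\~n\'e's theorem): that a minimizer $\mu$ with $L+c$ $\mu$-minimizable and unique fibrewise tangency is invariant under the Euler-Lagrange flow. As a proof of \emph{that} statement your outline is broadly consistent with the paper's (reduce to Theorem \ref{thm:invariance} by establishing $\mu\in\mathscr H_\Delta$ via the Lipschitz section $\sigma=df$ from Theorem \ref{thm:mathertimedependent} and Lemma \ref{lem:thatextraregularity}, then get horizontal criticality from global minimality plus the symmetry $F\mapsto -F$ of the class $\Theta_\Delta(L)$). But it cannot be accepted as a proof of Theorem \ref{thm:differentiablecurves}: the logical dependence actually runs the other way, since Theorem \ref{thm:differentiablecurves} underlies Theorem \ref{thm:variations}, which underlies the variational machinery you are invoking. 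You need to start over with the correct target.
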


\begin{lem}[{\cite[Lemma 7]{mydifferentiablecurves}}]\label{lem:boundednormability}
 Let $k\geq 1$ and $n\geq 0$ be two integers, $\gamma>0$, and $U$ a bounded open subset of $\R^k$, so that its closure $\overline W$ is compact. Consider the subset $X$ of $\mathscr E'$ consisting of compactly supported distributions $\xi$ of degree at most $n$ with support in $W$ and such that $|\xi|'_{\overline W,n}\leq \gamma$. Then the seminorm $|\cdot|'_{\overline W,n}$ is actually nondegenerate on $X$, and within $X$ it acts as a norm that induces the topology that $X$ inherits from $\mathscr E'$, that is, the weak* topology with respect to $C^\infty(\R^k)$. In particular, the sequential closure of subsets of $X$ coincides with their closure.
\end{lem}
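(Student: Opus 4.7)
The plan is to prove the two claims of the lemma in turn---nondegeneracy of the seminorm on $X$, and agreement of the seminorm topology with the topology inherited from $\mathscr E'$---and then deduce the corollary about sequential closure.

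\textbf{Step 1 (Nondegeneracy on $X$).} Suppose $\xi\in X$ satisfies $|\xi|'_{\overline W,n}=0$. Then $\langle\xi,f\rangle=0$ for every $f\in C^\infty$ supported in $\overline W$ with $\|f\|_{C^n(\overline W)}\leq 1$, and hence (by scaling) for every such $f$ without the norm restriction. Because $\supp\xi$ is a compact subset of the \emph{open} set $W$, one can pick a cutoff $\chi\in C^\infty_c(W)$ equal to $1$ in a neighborhood of $\supp\xi$. Then for any $\phi\in C^\infty(\R^k)$, the product $\chi\phi$ is smooth and supported in $\overline W$, so
$$\langle\xi,\phi\rangle=\langle\xi,\chi\phi\rangle=0.$$
Hence $\xi$ annihilates all of $C^\infty(\R^k)$, so $\xi=0$ in $\mathscr E'$.

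\textbf{Step 2 (Seminorm convergence $\Rightarrow$ weak* convergence).} For each fixed $\phi\in C^\infty(\R^k)$ the estimate $|\langle\xi,\phi\rangle|\leq|\xi|'_{\overline W,n}\cdot\|\phi\|_{C^n(\overline W)}$ shows that evaluation at $\phi$ is $|\cdot|'_{\overline W,n}$-continuous, so convergence in the seminorm automatically yields weak* convergence against every smooth test function.

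\textbf{Step 3 (Weak* convergence $\Rightarrow$ seminorm convergence on $X$).} This is the heart of the lemma, and the uniform bound $|\cdot|'_{\overline W,n}\leq\gamma$ on $X$ is indispensable (weak* convergence alone never implies operator-norm convergence in an infinite-dimensional dual). Suppose $\xi_j\to\xi$ weak* in $X$. Given any $f\in C^n$ with $\supp f\subset\overline W$ and $\|f\|_{C^n}\leq 1$, one mollifies to obtain $f_\varepsilon\in C^\infty$ and splits
$$|\langle\xi_j-\xi,f\rangle|\leq|\langle\xi_j-\xi,f_\varepsilon\rangle|+(|\xi_j|'_{\overline W,n}+|\xi|'_{\overline W,n})\,\|f-f_\varepsilon\|_{C^n(\overline W)}\leq|\langle\xi_j-\xi,f_\varepsilon\rangle|+2\gamma\,\|f-f_\varepsilon\|_{C^n(\overline W)}.$$
The second term is controlled by the uniform seminorm bound on $X$. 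For the first, the strategy is to choose the approximants $\{f_\varepsilon\}$ from a totally bounded family of $C^n(\overline W)$ (using the compact embedding $C^{n+1}(\overline W)\hookrightarrow C^n(\overline W)$ from Arzel\`a--Ascoli) so that weak* convergence against the family is \emph{uniform}, and then take $\varepsilon\to 0$ after $j\to\infty$ to conclude that $|\xi_j-\xi|'_{\overline W,n}\to 0$ uniformly over the unit ball.

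\textbf{Step 4 (Sequential closure).} Once the topology on $X$ is induced by a norm, it is first-countable, so the closure of any subset agrees with its sequential closure, yielding the last sentence of the lemma.

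The main obstacle I anticipate is Step 3: it is not automatic that $\|f-f_\varepsilon\|_{C^n}$ can be made small uniformly over the $C^n(\overline W)$ unit ball, so one either needs to compensate by exploiting the uniform bound $|\cdot|'_{\overline W,n}\leq\gamma$ in a more sophisticated way (e.g.\ by reducing to a dense/precompact subfamily on which weak* convergence is automatically uniform) or to reinterpret the seminorm through an equivalent, more flexible description. This is the delicate technical point likely carried out in detail in \cite{mydifferentiablecurves}.
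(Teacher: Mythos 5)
The paper itself does not prove Lemma \ref{lem:boundednormability}: it is quoted verbatim from \cite{mydifferentiablecurves} as a ``previous result,'' so there is no in-paper argument to compare yours against, and your proposal has to stand on its own. Steps 1, 2 and 4 are fine. Step 3, which you correctly identify as the heart of the matter, is not a proof but a plan with an acknowledged hole, and the hole cannot be closed in the form you set it up. If $|\cdot|'_{\overline W,n}$ is the dual norm of the $C^n(\overline W)$ seminorm, then weak* convergence of a $|\cdot|'_{\overline W,n}$-bounded sequence of order-$\leq n$ distributions does \emph{not} imply $|\cdot|'_{\overline W,n}$-convergence. Concretely, take $n=0$, points $x_j\to x$ in $W$ with $x_j\neq x$, and $\xi_j=\delta_{x_j}-\delta_x$: then $\xi_j\to 0$ weak* while $|\xi_j|'_{\overline W,0}=2$ for every $j$; for $n=1$ the difference quotients $\eta_j=j(\delta_{x+e/j}-\delta_x)+\partial_e\delta_x$ give the same failure, and so on for every $n$. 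So your mollification strategy is doomed at equal indices: the $C^n(\overline W)$ unit ball is not totally bounded in itself, and $\|f-f_\varepsilon\|_{C^n}$ genuinely cannot be made small uniformly over it. This is not a removable technicality but the reason the implication fails.

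What your Arzel\`a--Ascoli remark actually proves is a statement with an index gap. Total boundedness of the $C^{n+1}(\overline W)$ unit ball inside $C^n(\overline W)$ gives, for a finite $\varepsilon$-net $\phi_1,\dots,\phi_N$ and $\xi_j,\xi\in X$, the bound $\sup_{\|\phi\|_{C^{n+1}}\leq 1}|\langle\xi_j-\xi,\phi\rangle|\leq\max_i|\langle\xi_j-\xi,\phi_i\rangle|+2\gamma\varepsilon$, so weak* convergence within $X$ implies convergence in the \emph{weaker} seminorm $|\cdot|'_{\overline W,n+1}$; here the uniform bound $\gamma$ enters exactly once and does all the work. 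That index-shifted version is the one that can actually be proved (and, in view of the counterexample above, is presumably what Lemma 7 of \cite{mydifferentiablecurves} establishes: the order of the distributions must be strictly below the index of the metrizing seminorm, or the seminorm must be defined against a compactly embedded test class). As written, your Step 3 conflates the two indices. You should either restate and prove the lemma with the metrizing seminorm $|\cdot|'_{\overline W,n+1}$, or verify the precise definition of $|\cdot|'_{\overline W,n}$ in the source, because under the naive dual-norm reading the equivalence you are trying to prove is false.
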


\subsection{Minimizable Lagrangian action functionals}
\label{sec:minimizablefunctionals}

\begin{defn}
 Let $x$ be a point in an open set $U\subseteq M$.
 The form $\theta\in T_x^*M$ is a \emph{Clarke differential of $f\colon U\to \R$ at $x$} if it is in the convex hull of the accumulation points of the values of $df_y$ as $x\to y$. A section $\alpha\colon U\to T^*U$ is a \emph{Clarke differential of $f\colon U\to\R$} if it is a Clarke differential at every point of $U$.
\end{defn}

Let $\mathscr H$ be as defined in Section \ref{sec:variations}.

\begin{thm}[{\cite[Corollary 10]{myminimizable}}] \label{thm:mathertimedependent}
 Assume that $L$ is an element of $C^2(T\R^n\times I)$ such that $\nu\mapsto\int_{T\R^n\times I}L\,d\nu$ reaches its minimum within $\mathscr H$ at some point $\mu$. Let $U$ be a bounded and open subset of $\R^n$ such that $\pi(\supp\mu)\subseteq U\times I$. Let $\sigma\colon T\R^n\times I\to T(\R^n\times I)$ be given by $\sigma(x,v,t)=(x,v,t,\mathbf 1)$. Then there exist a Lipschitz function $f\colon U\times I\to\R$, a nonnegative function $g\colon TU\times I\to \R_{\geq 0}$, and a bounded (possibly discontinuous) section $\alpha\colon U\times I\to T^*(U\times I)$ such that:
 \begin{enumerate}
  \item $\alpha$ is a Clarke differential of $f$ (in particular, $df=\alpha$ wherever $f$ is differentiable;
  \item\label{it:decompositionL} throughout $TU\times I$ we have
   \[L=\frac{1}{\mu(T\R^n\times I)}\int_{T\R^n\times I}L\,d\mu+\alpha\circ \sigma+g,\]
  which in particular means that $L=\int_{T\R^n\times I}L\,d\mu/\mu(T\R^n\times I)+df\circ \sigma+g$ wherever $f$ is differentiable;
  \item\label{it:fisdifferentiable} for every open set $V\subset \R^n\times I$ not intersecting the boundary $\R^n\times\{0,t_0\}$, $f$ is differentiable on $\pi(\supp\mu)\cap V$ and, for $(x_0,t_0,v_0,\mathbf 1)\in\supp\mu\cap TV$, we have
   \begin{multline*}
    df_{(x_0,t_0)}(v,\tau)=\frac{\partial L}{\partial v}(x_0,t_0,v_0)\cdot v+\\\frac{1}{\mu(T\R^n\times I)}\left(L(x_0,t_0,v_0)-\frac{\partial L}{\partial v}(x_0,t_0,v_0)\cdot v_0-\int_{T\R^n\times I}L\,d\mu\right)\cdot \tau, 
   \end{multline*}
   where $v\in T_{x_0}\R^n$, $\tau\in TI$, and $\partial L/\partial v$ denotes the derivative of the restriction of $L$ to $T_{x_0}\R^n\times \{t\}\times \{1\}$, $t\in I$;
  \item\label{it:lipschitzity} for every open neighborhood $Y\subset U$ of $U\times \{0,t_0\}$, there is a constant $C_Y>0$ such that the map $(x,t)\mapsto df_{(x,t)}$ is $C_Y$-Lipschitz throughout $\pi(\supp\mu)\cap U\setminus Y$;
  \item $g\equiv 0$ throughout $\supp\mu$;
 \end{enumerate}
\end{thm}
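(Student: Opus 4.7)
The strategy is to verify the two hypotheses of Theorem \ref{thm:invariance}, namely that $\mu \in \mathscr H_\Delta$ (closed with closed velocities) and that $\mu$ is horizontally critical. Invariance under the Euler-Lagrange flow will then follow immediately from that theorem.

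\textbf{Step 1: $\supp \mu$ is a Lipschitz graph.} I would first show that there is a locally Lipschitz section $\sigma\colon \pi_{\R^n\times I}(\supp \mu) \to T\R^n\times I$, $\sigma(x,t)=(x,\sigma_v(x,t),t)$, whose graph contains $\supp \mu$, at least away from the boundary $\R^n\times\{0,t_0\}$. The uniqueness hypothesis --- that $L+c=df$ holds at exactly one point in each fiber over the projected support --- already yields that $\pi_{\R^n\times I}$ restricts to a bijection on $\supp \mu$, so $\supp\mu$ is a graph set-theoretically. For regularity, item \ref{it:fisdifferentiable} of Theorem \ref{thm:mathertimedependent} identifies the velocity component of $df_{(x,t)}$ with $L_v(x,\sigma_v(x,t),t)$, and item \ref{it:lipschitzity} gives local Lipschitzness of $df$ on $\pi(\supp\mu)$ away from the boundary; the uniqueness of $\sigma_v$ as a solution to $L_v(x,\cdot,t) = (df)_v$ then transfers this Lipschitzness from $L_v\circ\sigma$ to $\sigma_v$ itself.

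\textbf{Step 2: $\mu \in \mathscr H_\Delta$.} With $\sigma$ locally Lipschitz, Rademacher's theorem gives the existence at $\mu$-a.e.\ point of the flow-direction derivative $C(x,t) = \partial_t\sigma_v + \sigma_v\cdot\partial_x\sigma_v$. A direct computation in the spirit of Example \ref{ex:verynicecurve} (curve-induced closed measures) and Example \ref{ex:smoothisclosedvelocity} (smooth densities), now carried out along the Lipschitz graph, shows that $C$ is a second derivative for $\mu$ in the sense of item \ref{it:closedvelocityexists} of Proposition \ref{prop:closedvelocities}: for any smooth $f$ vanishing at $t=0,t_0$, integrating $df(v\partial_x+C\partial_v+\partial_t)$ against $\mu$ reduces, via the parametrization by $(x,t)\mapsto(x,\sigma_v(x,t),t)$, to integrating $\frac{d}{dt}[f\circ\sigma]$ against the $(x,t)$-marginal of $\mu$, which vanishes by the boundary condition on $f$.

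\textbf{Step 3: Horizontal criticality.} The $\mu$-minimizability of $L+c$ together with $\int df\,d\mu = 0$ gives $\int (L+c)\,d\mu = 0$; a mollification argument (extending the closedness relation $\int d\phi\,d\nu = 0$ from smooth $\phi$ to the Lipschitz $f$) shows that $\int (L+c)\,d\nu\geq 0$ for every $\nu\in\mathscr H$ of the same total mass as $\mu$, so $\mu$ minimizes $\int L\,d\nu$ among such $\nu$. Now let $\eta = \eta_{F,\mu,C}\in\Theta_\Delta(L)$ be a direction in which $\mu$ can be deformed, and let $(\mu_s)_{s\in\R}$ be a witnessing exact horizontal variation. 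Since $\langle \eta_{F,\mu,C},1\rangle = 0$, total mass is preserved to first order; since the variation is two-sided (replacing $F$ by $-F$ yields a variation with derivative $-\eta$), the map $s\mapsto\int L\,d\mu_s$ has a critical point at $s=0$. Hence $\langle \eta,L\rangle = 0$, which is horizontal criticality.

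\textbf{Step 4.} Invoke Theorem \ref{thm:invariance} to conclude that $\mu$ is invariant under the Euler-Lagrange flow.

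\textbf{Main obstacle.} I expect Step 1 to be the most delicate: extracting locally Lipschitz regularity of $\sigma$ requires combining the uniqueness hypothesis with items \ref{it:fisdifferentiable} and \ref{it:lipschitzity} of Theorem \ref{thm:mathertimedependent}, and care is needed near $\R^n\times\{0,t_0\}$ where item \ref{it:lipschitzity} provides no control. The approximation arguments needed in Step 3 to apply closedness to the Lipschitz primitive $f$ are routine but require honest checking.
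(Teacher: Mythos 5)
Your proposal does not prove the statement at hand. The statement is Theorem \ref{thm:mathertimedependent} itself: the existence, for a minimizer $\mu$ of the action within $\mathscr H$, of a Lipschitz function $f$, a nonnegative $g$ vanishing on $\supp\mu$, and a Clarke differential $\alpha$ of $f$ realizing the decomposition $L=\tfrac{1}{\mu(T\R^n\times I)}\int L\,d\mu+\alpha\circ\sigma+g$, together with the differentiability and Lipschitz properties of $df$ on $\pi(\supp\mu)$ listed in items \ref{it:fisdifferentiable} and \ref{it:lipschitzity}. What you have written instead is a proof plan for the corollary of Section \ref{sec:manethm} (Ma\~n\'e's theorem): you verify that $\mu\in\mathscr H_\Delta$ and is horizontally critical, and then invoke Theorem \ref{thm:invariance} to conclude invariance under the Euler-Lagrange flow. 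That is a different statement with a different conclusion. Worse, your argument is circular relative to the assigned task: Steps 1 and 2 explicitly use items \ref{it:fisdifferentiable} and \ref{it:lipschitzity} of Theorem \ref{thm:mathertimedependent} as inputs, so the theorem you were asked to prove appears as a hypothesis of your own argument.

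For the record, the paper gives no proof of Theorem \ref{thm:mathertimedependent}; it is imported verbatim from \cite{myminimizable} (Corollary 10 there) and stated in the appendix as a previous result. An actual proof would have to produce the critical subsolution $f$ from the minimality of $\mu$ over the convex set $\mathscr H$ --- typically by a Hahn--Banach/duality argument separating $L-\tfrac{1}{\mu(T\R^n\times I)}\int L\,d\mu$ from the cone of functions of the form $d\phi$ plus nonnegative remainders, followed by a regularity analysis of the resulting $f$ on the projected support to obtain items \ref{it:fisdifferentiable} and \ref{it:lipschitzity}. None of this machinery appears in your proposal. Your Steps 1--4 are a reasonable sketch for the Ma\~n\'e corollary (and indeed are close in spirit to the short proof the paper gives for that corollary), but they should be resubmitted against that statement, not this one.
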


 \section{The Euler--Lagrange equations for minimizers}
\label{sec:eulerlagrange}
Here we want to discuss the extent to which Theorem \ref{thm:mathertimedependent} implies that minimizers obey the Euler-Lagrange equations,
\begin{equation}\label{eq:eulerlagrangeappx}
\frac{d}{dt}\frac{\partial L}{\partial v}(\gamma(t),\gamma'(t),t)=
 \frac{\partial L}{\partial x}(\gamma(t),\gamma'(t),t),
\end{equation}
where $L$ is a function in $C^2(T\R^n\times \R)$ whose action reaches its minimum at a measure $\mu\in \mathscr H_1(c)$. 

We first want to remark that there are many circumstances in which the minimizers have no regularity whatsoever (see for example \cite[Example 23]{myminimizable}), and in general it is unrealistic to expect anything like \eqref{eq:eulerlagrangeappx} to hold.

So let us instead assume that the support of a minimizing measure $\mu$ contains the graph of a twice-differentiable curve $\gamma\colon(-\varepsilon,\varepsilon)\to \R^n$, $\varepsilon>0$. By item \ref{it:fisdifferentiable} in Theorem \ref{thm:mathertimedependent}, and with the same notations as in the theorem, we have that
\[\frac {\partial L}{\partial v}(\gamma(t),\gamma'(t),t)=\frac{\partial f}{\partial x}(\gamma(t),t).
\]
Also, by item \ref{it:decompositionL} in that theorem, we have that the following quantity is constant throughtout the support of $\mu$:
\[L-\frac{\partial f}{\partial x}(x,t)v-\frac{\partial f}{\partial t}(x,t).\]
It follows that
\begin{multline*}
 \frac{d}{dt}\frac{\partial L}{\partial v}(\gamma(t),\gamma'(t),t)=\frac{d}{dt}\frac{\partial f}{\partial x}(\gamma(t),t)
 =\frac{\partial^2f}{\partial x^2}(\gamma(t),t)\gamma'(t)+\frac{\partial^2 f}{\partial x\partial t}(\gamma(t),t)\\
 =\left.\frac{\partial}{\partial x}\left(\frac{\partial f}{\partial x}\cdot v+\frac{\partial f}{\partial t}\right)\right|_{(x,v,t)=(\gamma(t),\gamma'(t),t)}
 =\frac{\partial L}{\partial x}(\gamma(t),\gamma'(t),t).
\end{multline*}
Thus indeed $\gamma$ satisfies the Euler-Lagrange equations \eqref{eq:eulerlagrangeappx}.

We conclude that, in a sense, the existence of the function $f$ as in Theorem \ref{thm:mathertimedependent} (which one may call, a \emph{critical subsolution of the Hamilton-Jacobi equation}; cf. \cite{fathibook}) is more fundamental than the Euler-Lagrange equations, as the latter can be deduced from the former, and the latter may not always hold, while the statement of the theorem shows that the former always does. 

\bibliography{bib}{}
\bibliographystyle{plain}
\end{document}